\newcommand{\BC}{{\mathbb {C}}}
\newcommand{\BJ}{{\mathbf {J}}}
\newcommand{\BK}{{\mathbf {K}}}
\newcommand{\BZ}{{\mathbb {Z}}}
\def\Ddots{\mathinner{\mkern1mu\raise\p@
\vbox{\kern7\p@\hbox{.}}\mkern2mu
\raise4\p@\hbox{.}\mkern2mu\raise7\p@\hbox{.}\mkern1mu}}
\newcommand{\CB}{{\mathcal {B}}}
\newcommand{\CF}{{\mathcal {F}}}
\newcommand{\CH}{{\mathcal {H}}}
\newcommand{\CJ}{{\mathcal {J}}}
\newcommand{\CP}{{\mathcal {P}}}
\newcommand{\FA}{{\mathfrak {A}}}
\newcommand{\FB}{{\mathfrak {B}}}
\newcommand{\FK}{{\mathfrak {K}}}
\newcommand{\FL}{{\mathfrak {L}}}
\newcommand{\FO}{{\mathfrak {O}}}
\newcommand{\FP}{{\mathfrak {P}}}
\newcommand{\Fo}{{\mathfrak {o}}}
\newcommand{\Fp}{{\mathfrak {p}}}
\newcommand{\RB}{{\mathrm {B}}}
\newcommand{\RG}{{\mathrm {G}}}
\newcommand{\RP}{{\mathrm {P}}}
\newcommand{\RU}{{\mathrm {U}}}
\newcommand{\RZ}{{\mathrm {Z}}}
\newcommand{\ScB}{{\mathscr {B}}}
\newcommand{\Aut}{{\operatorname{Aut}}}
\newcommand{\der}{{\operatorname{der}}}
\newcommand{\End}{{\operatorname{End}}}
\newcommand{\GL}{{\operatorname{GL}}}
\newcommand{\Hom}{{\operatorname{Hom}}}
\newcommand{\Ind}{{\operatorname{Ind}}}
\newcommand{\ind}{{\operatorname{ind}}}
\newcommand{\Supp}{{\operatorname{Supp}}}
\newcommand{\tr}{{\operatorname{tr}}}
\newcommand{\incl}{\hookrightarrow}
\def\bet{{\beta}}
\def\boI{{\boldsymbol 1}}
\def\eps{{\varepsilon}}
\def\gam{{\gamma}}
\def\rad{{\rm rad}}
\newtheorem{cor}[equation]{Corollary}
\newtheorem{lem}[equation]{Lemma}
\newtheorem{prop}[equation]{Proposition}
\newtheorem{conj}[equation]{Conjecture}
\newtheorem{ques/conj}[equation]{Question/Conjecture}
\newtheorem{defn}[equation]{Definition}
\newtheorem*{Jconj}{Conjecture~$\bJ(\bN,\br)$}
\newtheorem*{Jconjo}{Conjecture~$\bJ_{\boldsymbol 0}(\bN,\br)$}
\newcommand{\Rmnum}[1]{\expandafter\@slowromancap\romannumeral #1@}
\def\ignore#1{\relax}
\def\bJ{{\boldsymbol{\CJ}}}
\def\bN{{\boldsymbol{N}}}
\def\br{{\boldsymbol{r}}}
\begin{document}
\numberwithin{equation}{section}

\title[Local Converse Problem]{On the Jacquet Conjecture on the Local Converse Problem for $p$-adic $\GL_n$}

\author{Moshe Adrian}
\address{Department of Mathematics\\
University of Toronto\\
Toronto, ON, Canada M5S 2E4}
\email{madrian@math.toronto.edu}

\author{Baiying Liu}
\address{Department of Mathematics\\
University of Utah\\
Salt Lake City, UT 84112, U.S.A.}
\email{liu@math.utah.edu}

\author{Shaun Stevens}
\address{School of Mathematics\\
University of East Anglia\\
Norwich Research Park, Norwich, NR4 7TJ, UK}
\email{Shaun.Stevens@uea.ac.uk}

\author{Peng Xu}
\address{Mathematics Institute\\
University of Warwick\\
Coventry, CV4 7AL, UK}
\email{Peng.Xu@warwick.ac.uk}

\begin{abstract}
Based on previous results of Jiang, Nien and the third author, we
prove that any two \emph{minimax} unitarizable supercuspidals
of~$\GL_N$ that have the same depth and central character admit a
\emph{special pair} of Whittaker functions.  This result gives a new
reduction towards a final proof of Jacquet's conjecture on the local
converse problem for~$\GL_N$. As a corollary of our result, we
prove Jacquet's conjecture for~$\GL_N$, when~$N$ is prime.
\end{abstract}

\date{\today}
\subjclass[2000]{Primary 11S70, 22E50; Secondary 11F85, 22E55.}
\keywords{Local converse problem, Special pairs of Whittaker functions.}
\thanks{SS and PX were supported by the Engineering and Physical Sciences
  Research Council (grant EP/H00534X/1). BL is supported in part by a postdoc research funding from Department of Mathematics, University of Utah, and in part by Dan Ciubotaru's NSF Grants DMS-1302122}
\maketitle


\section{Introduction}\label{intro}

In the representation theory of a group~$G$, one of the basic problems
is to characterize its irreducible representations up to isomorphism.
If~$G$ is the group of points of a reductive algebraic group defined
over a non-archimedean local field~$F$, there are many invariants that
one can attach to a representation~$\pi$ of~$G$, some of which are the
central character and depth.  Capturing all of these invariants,
however, is a family of complex functions, invariants themselves,
called the \emph{local gamma factors} of~$\pi$.

Now let~$\RG_N:=\GL_N(F)$ and let~$\pi$ be an
irreducible generic representation of~$\RG_N$. The family
of local gamma factors~$\gamma(s, \pi \times \tau, \psi)$, for~$\tau$
an irreducible generic representation of~$\RG_r$, can be
defined using Rankin-Selberg convolution~\cite{JPSS83} or the
Langlands-Shahidi method~\cite{S84}. Jacquet has formulated the
following conjecture on precisely which family of local gamma factors
should uniquely determine~$\pi$.

\begin{conj}[The Jacquet Conjecture on the Local Converse Problem]\label{lcp1}
Let~$\pi_1,\pi_2$ be irreducible generic representations
of~$\RG_N$. If
\[
\gamma(s, \pi_1 \times \tau, \psi) = \gamma(s, \pi_2 \times \tau,
\psi),
\]
as functions of the complex variable~$s$, for all irreducible
generic representations~$\tau$ of~$\RG_r$ with~$r = 1, \ldots,
[\frac{N}{2}]$, then~$\pi_1 \cong \pi_2$.
\end{conj}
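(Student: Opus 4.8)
The plan is to reduce Conjecture~\ref{lcp1} to a statement about supercuspidal representations, and then, for any two supercuspidals with matching invariants, to construct a \emph{special pair of Whittaker functions} in the sense of Jiang, Nien and the third author; for $N$ prime this already yields the conjecture, and in general it records a new reduction toward it. First I would reduce to the supercuspidal case. Every irreducible generic representation of $\RG_N$ is the Langlands quotient attached to a multiset of essentially square-integrable representations, each of which is the generic constituent of a segment of supercuspidals of some $\RG_m$ with $m\le N$, and the Rankin--Selberg $\gamma$-factor $\gamma(s,\pi\times\tau,\psi)$ is multiplicative in this data up to shifts in the complex variable. Combining multiplicativity with an analysis of poles — a by-now-standard reduction in the foundational work on the local converse problem — one shows that equality of all $\gamma(s,\pi_1\times\tau,\psi)$ and $\gamma(s,\pi_2\times\tau,\psi)$ for $\tau$ of rank $r\le[N/2]$ forces $\pi_1\cong\pi_2$ once the corresponding assertion is known for supercuspidal representations. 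I would take this reduction as input.

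For supercuspidals I would follow the Whittaker-function strategy. The twists by $\GL_1$ already determine the central character and the depth of a supercuspidal, and the problem is unchanged under replacing $(\pi_1,\pi_2)$ by $(\pi_1\otimes\chi,\pi_2\otimes\chi)$ for a character $\chi$, since $\gamma(s,\pi\otimes\chi\times\tau,\psi)=\gamma(s,\pi\times\tau\otimes\chi,\psi)$; so one may assume $\pi_1,\pi_2$ have the same central character and the same, minimal, depth. A \emph{special pair of Whittaker functions} for $(\pi_1,\pi_2)$ is a pair $W_i\in\CW(\pi_i,\psi)$ satisfying an explicit compatibility condition with the property — this is the theorem of Jiang, Nien and the third author — that if $\pi_1,\pi_2$ have the same central character and admit such a pair, then the equality of $\gamma(s,\pi_1\times\tau,\psi)$ and $\gamma(s,\pi_2\times\tau,\psi)$ for all generic $\tau$ of rank $r\le[N/2]$ forces $\pi_1\cong\pi_2$. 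So it suffices to produce a special pair whenever $\pi_1,\pi_2$ share depth and central character; for this I would use the Bushnell--Kutzko classification, writing each $\pi_i$ as a compact induction of a maximal simple type, together with the explicit models of the Whittaker functions in $\CW(\pi_i,\psi)$ due to Paskunas and Stevens.

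The hard part — and the reason the unconditional conclusion is confined to prime $N$ — is exactly this last construction. Modelling on Nien's treatment of the finite-field analogue, I would express the relevant Whittaker functions in terms of the simple stratum $[\mathfrak{A},m,0,\beta]$, $E=F[\beta]$, underlying each type and compare their values on the coset representatives that enter the Rankin--Selberg integrals. When both types come from \emph{maximal} and \emph{minimal} strata over an extension $E/F$ of full degree over $F$ — the \emph{minimax} situation — the relevant combinatorics essentially reduces to the finite-field picture and one can force the required coincidence of values; for general supercuspidals, where the two simple strata need not be comparable and the fields $E$ can be of intermediate degree, the intertwining of the two types is far more intricate and no such direct matching is presently available. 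This is the main obstacle, and the minimax hypothesis is precisely what keeps it tractable. Finally, when $N$ is prime we have $[E:F]\mid N$ with $[E:F]>1$, hence $[E:F]=N$; so after the minimal twist — and after separately disposing of the depth-zero case, which arises from a cuspidal representation of $\GL_N(\mathbb{F}_q)$ — every supercuspidal of $\RG_N$ is minimax, and combining this with the reductions above proves Conjecture~\ref{lcp1} for $N$ prime.
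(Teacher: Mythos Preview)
Your proposal is correct and follows essentially the same route as the paper: reduce to unitarizable supercuspidals (via~\cite{JNS13}), twist to minimal depth, invoke the special-pair criterion of~\cite{JNS13}, construct a special pair of Whittaker functions in the minimax case using the Pa{\v s}k{\=u}nas--Stevens explicit Whittaker functions, and conclude for~$N$ prime since every positive-depth supercuspidal of minimal depth is then minimax (with depth zero handled separately by~\cite{JNS13}). The only mild difference is emphasis: where you gesture at ``reducing to the finite-field picture'' \`a la Nien, the paper instead proves two concrete structural lemmas---that the companion-matrix basis for a minimax~$\beta$ is automatically a splitting basis for~$\FA$ determined by the depth alone (so the two hereditary orders coincide), and that~$H^1(\beta,\FA)\cap P_{\ScB}=U^{[n/2]+1}(\FA)\cap P_{\ScB}$---which pin down the Whittaker functions on the mirabolic and force them to agree.
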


We refer to the introductions of~\cite{Ch06} and~\cite{JNS13} for more
related discussions on the previous known results on this
conjecture. Moreover, in general, from the discussion
in~\cite[Sections~5.2 and~5.3]{Ch96}, it is expected that the upper
bound~$[\frac{N}{2}]$ is sharp.

In~\cite[Section~2.4]{JNS13}, Conjecture~\ref{lcp1} is shown to be
equivalent to the same conjecture with the adjective ``generic''
replaced by ``unitarizable supercuspidal'' (recall that an irreducible
representation is \emph{supercuspidal} if it is not a subquotient of a
properly parabolically induced representation, while all supercuspidal
representations are generic). However, in the situation
that~$\pi_1,\pi_2$ are both supercuspidal, it may be that the upper
bound~$[\frac{N}{2}]$ is no longer sharp, at least within certain
families of supercuspidals: for example, for \emph{simple}
supercuspidals (of depth~$\frac 1N$), the upper bound may be lowered
to~$1$ (see~\cite[Proposition 2.2]{BH14} and~\cite[Remark
3.18]{AL14} in general, and~\cite{X13} in the tame case).

Thus, for~$m\ge 1$ an integer, it is convenient for us to say that
irreducible supercuspidal representations~$\pi_1,\pi_2$ of~$\RG_N$
\emph{satisfy hypothesis~$\CH_m$} if
\begin{itemize}
\item[{($\CH_m$)\ \ }]
$\gam(s,\pi_1\times\tau,\psi)=\gam(s,\pi_2\times\tau,\psi)$ as functions
of the complex variable~$s$, for all irreducible supercuspidal
representations~$\tau$ of~$\RG_m$.
\end{itemize}
For~$r\ge 1$, we say that~$\pi_1,\pi_2$ \emph{satisfy
  hypothesis~$\CH_{\le r}$} if they satisfy hypothesis~$\CH_m$,
for~$1\le m\le r$. Then we can state a family of ``conjectures''.

\begin{Jconj}
If~$\pi_1,\pi_2$ are irreducible supercuspidal representations
of~$\RG_N$ which satisfy hypothesis~$\CH_{\le r}$, then~$\pi_1\simeq\pi_2$.
\end{Jconj}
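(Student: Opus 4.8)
The plan is to prove Conjecture~$\bJ(\bN,\br)$ in the case~$r=[\tfrac N2]$ --- which, via the equivalence of~\cite[Section~2.4]{JNS13}, is the same as Conjecture~\ref{lcp1} itself --- by supplying the missing geometric input for the special-pair-of-Whittaker-functions formalism of~\cite{JNS13} in the minimax case, so that the prime case comes out unconditionally. The first move is a chain of reductions. By~\cite[Section~2.4]{JNS13} one may assume~$\pi_1,\pi_2$ are irreducible unitarizable supercuspidal. It is a standard property of Rankin--Selberg local factors that twisting by characters of~$F^\times=\RG_1$ recovers both the central character of~$\pi_i$ and, through conductor exponents, its depth; hence hypothesis~$\CH_1$ alone already forces~$\pi_1$ and~$\pi_2$ to have the same depth and the same central character. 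It therefore suffices to prove: two minimax unitarizable supercuspidals of~$\RG_N$ with equal depth and central character that satisfy~$\CH_{\le[\tfrac N2]}$ are isomorphic.

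For this I would use the Bushnell--Kutzko classification: each~$\pi_i$ is compactly induced from an extended maximal simple type~$(J_i,\Lambda_i)$ attached to a simple stratum with associated field extension~$E_i=F[\bet_i]$ over~$F$. In the minimax case the underlying hereditary order is strongly constrained --- up to conjugacy it is maximal or an Iwahori order --- and equality of the depths selects the same one of these regimes for both~$\pi_1$ and~$\pi_2$, so that after conjugation we may assume~$J_1$ and~$J_2$ are built on one common order, with~$\Lambda_i$ determined by a simple character~$\theta_i$, a~$\bet$-extension of it, and a cuspidal representation of the finite reductive quotient~$J_i/J_i^1$. I would then take the explicit Whittaker function of~$\pi_i$ --- supported, modulo the centre, on~$J_i^1$ translated by a fixed diagonal element adapted to~$\psi$ --- and reduce the conditions defining a \emph{special pair of Whittaker functions} in the sense of~\cite{JNS13} (broadly: equality of~$W_1$ and~$W_2$ on the mirabolic subgroup~$\RP_N$, together with a constraint on their supports) to a comparison of the simple characters~$\theta_1,\theta_2$ and of their~$\bet$-extensions on the relevant compact set; equality of depth and central character is exactly what makes that comparison go through. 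Once the pair is in hand, a theorem of~\cite{JNS13} --- that~$\CH_{\le[\tfrac N2]}$ together with a special pair of Whittaker functions implies~$\pi_1\cong\pi_2$, obtained by inserting the pair into the Rankin--Selberg integral and its functional equation --- completes the argument.

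The prime case is then immediate: if~$N$ is prime, $[E_i:F]\mid N$ forces~$[E_i:F]\in\{1,N\}$, so the underlying hereditary order is automatically maximal or Iwahori, every irreducible supercuspidal of~$\RG_N$ is minimax, and Conjecture~$\bJ(\bN,\br)$ for~$r=[\tfrac N2]$ holds with no extra hypothesis. I expect the crux of the difficulty --- and the reason the method stops at the minimax case --- to be the construction of the special pair when~$1<[E_i:F]<N$: there the hereditary orders underlying~$J_1$ and~$J_2$ need not be conjugate, the explicit Whittaker functions live on genuinely different double cosets, and forcing their restrictions to~$\RP_N$ to agree becomes a combinatorial matching problem that I do not see how to handle by the present direct construction. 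Removing that restriction is precisely what a complete proof of Conjecture~$\bJ(\bN,\br)$ (equivalently, of Conjecture~\ref{lcp1}) would require.
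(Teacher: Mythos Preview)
Your overall strategy matches the paper's, and you correctly flag that the non-minimax case remains open, but there are genuine gaps both in your reduction to the minimax case and in the construction itself. For the prime case, your claim that ``every irreducible supercuspidal of~$\RG_N$ is minimax'' is false: the dichotomy $[E_i:F]\in\{1,N\}$ is right, but $[E_i:F]=1$ means the minimal stratum contained in~$\pi_i$ is \emph{scalar}, not minimax, and depth zero supercuspidals contain no simple stratum at all. The paper handles depth zero by quoting~\cite[Corollary~1.7]{JNS13}, and eliminates the scalar case by a separate twist reduction (Proposition~\ref{prop:redmin}): one replaces each~$\pi_i$ by a twist~$\pi_i\chi$ of minimal depth in its twist class, checking via Lemma~\ref{lem:transfertwist} that hypothesis~$\CH_{\le r}$ is preserved under simultaneous twisting. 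Only after this reduction can one assert, for~$N$ prime and positive depth, that a minimax stratum is present. You have no mechanism for either step.

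For the minimax construction itself, your assertion that the hereditary order is ``maximal or an Iwahori order'' is wrong for composite~$N$ (its period is~$e(E/F)$, which may be any divisor of~$N$), and more importantly you give no reason why~$\FA_1$ and~$\FA_2$ should \emph{coincide} rather than merely be conjugate of the same type. The paper's device is to conjugate so that~$\bet_1$ and~$\bet_2$ are in companion matrix form with respect to a \emph{common} basis~$\ScB$; then Lemma~\ref{lem:minimaxsplit} shows~$\FA_1=\FA_2$ because the order, as a splitting order for~$\ScB$, is determined by the depth alone, and Lemma~\ref{lem:maxintP} gives~$H^1_i\cap P_\ScB=U^{[n/2]+1}(\FA)\cap P_\ScB$, on which both simple characters~$\theta_i$ restrict to~$\psi_\ScB$ by~\eqref{eqn:psiBonPcap}. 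That --- not a comparison of~$\bet$-extensions or the central character --- is what forces the explicit Whittaker functions of~\cite{PS08} to agree on~$P_\ScB$.
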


Thus Jacquet's conjecture is (equivalent to) Conjecture~$\CJ(N,[\frac N2])$, while
Conjecture~$\CJ(N,N-2)$ is a Theorem due to Chen~\cite{Ch96,Ch06} and
to Cogdell and Piatetskii-Shapiro~\cite{CPS99}.
On the other hand, examples in~\cite{Ch96}
also show that Conjecture~$\CJ(4,1)$ is false, and similar examples
show that~$\CJ(N,1)$ is false when~$N$ is composite.

Here we prove Conjecture~$\CJ(N,[\frac N2])$ when~$N$ is prime (see
Corollary~\ref{cor:Jprime}), and make some further elementary
reductions in general. This builds on the work in~\cite{JNS13}, where
many cases are proved. Indeed, our work here is to tackle the
``simplest'' case left out in~\cite{JNS13}, which is sufficient
when~$N$ is prime. We hope that this will be the first step in an
inductive proof allowing all~$N$ to be treated.

On the other hand, it is not clear to us whether
conjecture~$\CJ(N,[\frac N2])$ is optimal, when~$N$ is prime. For
example, we do not know whether conjecture~$\CJ(5,1)$ is true: is it
possible to distinguish supercuspidal representations of~$\GL_5(F)$
only by the local gamma factors of their twists by characters? If so,
it would be tempting to suggest, more generally, that two irreducible
supercuspidal representations of~$\RG_N$ which satisfy
hypothesis~$\CH_m$, for all~$m<N$ \emph{dividing}~$N$, should be
equivalent. Our methods here do not shed light on this question.

Finally we describe the contents of the paper and the scheme of the
proof. In~\cite{JNS13}, Jiang, Nien and the third author introduced
the notion of a \emph{special pair of Whittaker functions} for a pair
of irreducible unitarizable supercuspidal representations~$\pi_1,\pi_2$
of~$\RG_N$ (see Section~\ref{S:special}). They also proved that, if there
is such a pair and~$\pi_1,\pi_2$ satisfy hypothesis$~\CH_{\le [N/2]}$,
then~$\pi_1,\pi_2$ are equivalent, as well as finding special pairs of
Whittaker functions in many cases, in particular the case of depth
zero representations.

The simplest case omitted in~\cite{JNS13} is a case we
call~\emph{minimax}, described as follows. Each supercuspidal
representation~$\pi_i$ is irreducibly induced from a representation of
a compact-mod-centre subgroup, called an \emph{extended maximal simple
  type}~\cite{BK93}; amongst the data from which this is built, is a
\emph{simple stratum}~$[\FA,n,0,\beta]$ and we say the representation is minimax
if the field extension~$F[\beta]/F$ has degree~$N$ and the
element~$\beta$ is \emph{minimal} in the sense
of~\cite[(1.4.14)]{BK93} (see Section~\ref{S:strata} for
recollections).

After preparing the ground in Sections~\ref{S:unip}, \ref{S:minimax}, we
prove that any pair of minimax unitarizable supercuspidal
representations of~$\RG_N$ with the same (positive) depth and central
character possesses a special pair of Whittaker functions (see
Proposition~\ref{prop:minimax}).

Finally, when~$N$ is prime, any irreducible supercuspidal
representation is a twist by some character of either a depth zero
representation or of a minimax supercuspidal representation. Then
Jacquet's conjecture follows from the results in~\cite{JNS13} and a
reduction to representations which are of minimal depth among their
twists (see Section~\ref{S:twist}).
%

\subsection{Notation}

Throughout,~$F$ is a locally compact nonarchimedean local field, with
ring of integers~$\Fo_F$, maximal ideal~$\Fp_F$, and residue field~$k_F$
of cardinality~$q_F$ and characteristic~$p$; we also write~$\nu_F$ for the normalized valuation on~$F$, with image~$\BZ$, and~$|\cdot|$ for
the normalized absolute value on~$F$, with image~$q_F^{\BZ}$. We use
similar notation for finite extensions of~$F$.
We fix once and for all an additive
character~$\psi_F$ of~$F$ which is trivial on~$\Fp_F$ and nontrivial
on~$\Fo_F$.

For~$r\ge 1$, we set~$\RG_r=\GL_r(F)$, and denote by~$\RU_r$ the unipotent radical of the standard Borel
subgroup~$\RB_r$ of~$\RG_r$, consisting of upper-triangular
matrices. We denote by~$\psi_r$ the standard nondegenerate character of~$\RU_r$, given by
\[
\psi_r(u)=\psi_F\left(\sum_{i=1}^{r-1} u_{i,i+1}\right),
\]
where~$(u_{ij})$ is the matrix of~$u\in\RU_r$. We also denote by~$\RZ_r$ the centre of~$\RG_r$, and by~$\RP_r$ the standard mirabolic subgroup consisting of matrices with last row equal
to~$(0,\ldots,0,1)$.

We fix an integer~$N\ge 2$ and abbreviate~$\RG=\RG_N$. We also put~$V=F^N$ and~$A=\End_F(V)$, and identify~$\RG$ with~$\Aut_F(V)$ via the standard basis of~$F^N$.

All representations considered are smooth representations with complex coefficients.

\section{Special pairs of Whittaker functions}\label{S:special}

In this section, we recall the main results on special pairs of Whittaker functions. For further background, we refer to~\cite{JNS13} and the references therein.

Let~$\pi$ be an irreducible supercuspidal representation of~$\RG$. By the existence and uniqueness of local Whittaker models,~$\Hom_{\RG}(\pi,\Ind_{\RU_N}^{\RG}\psi_N)$ is a one-dimensional space. A \emph{Whittaker function} for~$\pi$ is any function~$W_\pi$ in~$\Ind_{\RU_N}^{\RG}\psi_N$ which is in the image of~$\pi$ under a non-zero homomorphism in this~$\Hom$-space. In~\cite{JNS13}, the following definitions were introduced.

\begin{defn} Let~$\pi$ be an irreducible unitarizable supercuspidal representation of~$\RG$ and let~$\BK$ be a compact-mod-centre open subgroup of~$\RG$. A nonzero Whittaker function~$W_{\pi}$ for~$\pi$ is called~\emph{$\BK$-special} if the support of~$W_{\pi}$ satisfies~$\Supp (W_{\pi}) \subset \RU_N \BK$, and if
\[
W_{\pi}(k^{-1})=\overline{W_{\pi}(k)} \text{ for all } k \in \BK,
\]
where~$\overline{z}$ denotes the complex conjugate of~$z \in \BC$.
\end{defn}

\begin{defn}
For~$i=1,2$, let~$\pi_i$ be an irreducible unitarizable supercuspidal representation of~$\RG$ and let~$W_{\pi_i}$ be a nonzero Whittaker function for $\pi_i$. Suppose moreover that~$\pi_1,\pi_2$ have the  same central character. Then~$(W_{\pi_1}, W_{\pi_2})$ is called a \emph{special pair} of Whittaker functions for the pair~$(\pi_1, \pi_2)$ if there exists a compact-mod-centre open subgroup~$\BK$ of~$\RG$ such that~$W_{\pi_1}$ and~$W_{\pi_2}$ are both~$\BK$-special and
\[
W_{\pi_1}(p)=W_{\pi_2}(p), \text{ for all } p \in P_N.
\]
\end{defn}

The condition in this definition that the representations have the
same central character is rather mild in our situation since,
by~\cite[Corollary~2.7]{JNS13}, if~$\pi_1,\pi_2$ are irreducible
supercuspidal representations of~$\RG$ which satisfy
hypothesis~$\CH_1$, then they have the same central character.

The following is one of the main results in~\cite{JNS13}, which provides a general approach to proving Conjecture~$\CJ(N,[\frac N2])$.

\begin{prop}[{\cite[Theorem~1.5]{JNS13}}]\label{prop:SPimpliesJ}
Let~$\pi_1,\pi_2$ be irreducible unitarizable supercuspidal representations
of~$\RG$ which have a special pair of Whittaker functions and
satisfy hypothesis~$\CH_{\le [N/2]}$. Then~$\pi_1\simeq\pi_2$.
\end{prop}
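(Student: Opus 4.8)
The plan is to deduce the proposition directly from the Rankin--Selberg theory of~\cite{JPSS83} together with the stability properties of the special pair. First I would recall that, by the local functional equation for~$\GL_N\times\GL_r$, the equality of gamma factors~$\gamma(s,\pi_1\times\tau,\psi)=\gamma(s,\pi_2\times\tau,\psi)$ for all irreducible supercuspidal~$\tau$ of~$\RG_r$, $1\le r\le[N/2]$, can be converted into an equality of local integrals: for suitable matrix coefficients (equivalently, for suitable choices in the Whittaker model of~$\tau$), the Rankin--Selberg integral~$\Psi(s;W_{\pi_1},W_\tau)$ attached to a Whittaker function~$W_{\pi_1}$ of~$\pi_1$ agrees, up to the common gamma factor, with the dual integral~$\widetilde\Psi(s;W_{\pi_1},W_\tau)$, and similarly for~$\pi_2$. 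The point of the special pair is that~$W_{\pi_1}$ and~$W_{\pi_2}$ agree on the mirabolic~$\RP_N$; since the ``forward'' Rankin--Selberg integrals for~$\GL_N\times\GL_r$ with~$r<N$ only involve the integrand restricted to (a quotient involving)~$\RP_N$, this forces~$\Psi(s;W_{\pi_1},W_\tau)=\Psi(s;W_{\pi_2},W_\tau)$ for all such~$\tau$, and hence the dual integrals agree as well.

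The second step is to exploit the~$\BK$-special condition~$W_{\pi_i}(k^{-1})=\overline{W_{\pi_i}(k)}$ together with the support condition~$\Supp(W_{\pi_i})\subset\RU_N\BK$. Because the two representations are unitarizable with the same central character, a Whittaker function for the contragredient~$\widetilde\pi_i$ can be produced from~$W_{\pi_i}$ by the recipe~$g\mapsto\overline{W_{\pi_i}(w_N\,{}^{t}g^{-1})}$ (Whittaker model of the contragredient), and the~$\BK$-special symmetry is exactly what is needed to relate the dual integral~$\widetilde\Psi$ back to a forward integral for the contragredients. Thus the agreement of the dual integrals, combined with the symmetry, yields that~$W_{\widetilde\pi_1}$ and~$W_{\widetilde\pi_2}$ also agree on~$\RP_N$ after the relevant averaging. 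Running Rankin--Selberg theory once more --- now in the limiting/asymptotic regime, using that supercuspidal Whittaker functions are compactly supported modulo~$\RU_N\RZ_N$ --- lets one recover the values of~$W_{\pi_i}$ on all of~$\RG_N$, not merely on~$\RP_N$, from the collection of integrals against twists by~$\tau$ of rank~$\le[N/2]$. This is the step that genuinely uses the full range~$r=1,\dots,[N/2]$ rather than just~$r<N-1$.

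Concretely, I would organize it as: (i) fix the special pair~$(W_{\pi_1},W_{\pi_2})$ relative to~$\BK$; (ii) show~$\Psi(s;W_{\pi_1},W_\tau)=\Psi(s;W_{\pi_2},W_\tau)$ for all supercuspidal~$\tau$ of rank~$\le[N/2]$ using only the equality on~$\RP_N$; (iii) apply the functional equation and hypothesis~$\CH_{\le[N/2]}$ to get equality of the dual integrals; (iv) use the~$\BK$-special symmetry to turn this into information about~$\pi_1$ and~$\pi_2$ on a set of representatives meeting every~$\RU_N\RZ_N$-coset, invoking Bruhat-type decompositions to reduce an arbitrary~$g\in\RG_N$ to something in~$\RU_N\RP_N\RZ_N$ up to the Weyl element; (v) conclude that the Whittaker functions, hence the Whittaker models, hence~$\pi_1$ and~$\pi_2$, coincide.

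The main obstacle I anticipate is step (iv): passing from ``the Whittaker functions agree on the mirabolic'' (plus a symmetry under~$\BK$) to ``the Whittaker functions agree everywhere'' is precisely the content of the local converse theorem of Henniart and of Chen, and the delicate point is that the mirabolic~$\RP_N$ together with~$\RZ_N$ does \emph{not} fill up~$\RG_N$ --- one is missing the Weyl element~$w_N$, and it is exactly there that the twists by~$\tau$ of the \emph{largest} rank~$[N/2]$ are needed to pin down the remaining values. Controlling the interplay between the support condition, the central character normalization, and the asymptotics of the Rankin--Selberg integrals near the boundary is where the real work lies; the rest is bookkeeping with the functional equation. (Since the statement is quoted from~\cite[Theorem~1.5]{JNS13}, in the paper itself this is of course simply cited rather than reproved.)
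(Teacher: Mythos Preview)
You have correctly identified the situation: in the paper this proposition is not proved but simply quoted from~\cite[Theorem~1.5]{JNS13}, as you note in your final parenthetical remark. There is therefore no ``paper's own proof'' to compare against; the authors treat it as a black box imported from~\cite{JNS13}.

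As for your sketch of what the argument in~\cite{JNS13} ought to look like: the broad strokes are right --- agreement on~$\RP_N$ forces agreement of the forward Rankin--Selberg integrals for~$r<N$, the gamma-factor hypothesis then matches the dual integrals via the functional equation, and the~$\BK$-special symmetry together with the support condition~$\Supp(W_{\pi_i})\subset\RU_N\BK$ is what closes the loop. One clarification on your step~(iv): the reduction is not really a Bruhat decomposition of all of~$\RG_N$, since the support hypothesis already confines everything to~$\RU_N\BK$; rather, the work in~\cite{JNS13} is to show that the dual-integral identities, unwound through the~$\BK$-special relation~$W(k^{-1})=\overline{W(k)}$, pin down~$W_{\pi_i}$ on enough of~$\BK$ beyond~$\BK\cap\RP_N$ to force~$W_{\pi_1}=W_{\pi_2}$ globally. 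The range~$r\le[N/2]$ enters because the dual integral for~$\GL_N\times\GL_r$ probes values of~$W$ on a set whose size grows with~$r$, and~$[N/2]$ is exactly what is needed to cover the remaining cosets. Your instinct that this step is the genuine content is correct.
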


In~\cite{JNS13}, in several cases it is proved that a pair of supercuspidal
representations of~$\RG$ have a special pair of Whittaker
functions. Here we prove another case, the simplest case left open in~\cite{JNS13}; as we will see, this is sufficient to prove Conjecture~$\CJ(N,[\frac N2])$ in the case that~$N$ is prime.

\section{Strata}\label{S:strata}

In order to use Proposition~\ref{prop:SPimpliesJ}, we need to recall some parts of the construction of supercuspidal representations in~\cite{BK93}, in particular the notion of a stratum.

We begin with a hereditary~$\Fo_F$-order~$\FA$ in~$A=\End_F(V)$, with Jacobson radical~$\FP$, and we denote by~$e=e(\FA|\Fo_F)$ the~$\Fo_F$-period of~$\FA$, that is, the integer such that~$\Fp_F\FA=\FP^e$. For any such hereditary order~$\FA$, there is an ordered basis with respect to which~$\FA$ is in \emph{standard form}, that is, it consists of matrices with coefficients in~$\Fo_F$ which are block upper-triangular modulo~$\Fp_F$. Such a basis can be found as follows.

Recall that an~$\Fo_F$-lattice chain in~$V$ is a set of~$\Fo_F$-lattices which is linearly ordered by inclusion and invariant under multiplication by scalars in~$F^\times$. Then there is a unique~$\Fo_F$-lattice chain~$\FL=\{L_i\mid i\in\BZ\}$ in~$V$ such that~$\FA=\{x\in A\mid x L_i\subseteq L_i\text{ for all }i\in\BZ\}$. (The set~$\FL$ is uniquely determined by~$\FA$, though the base point~$L_0$ for the indexing is arbitrary.) For~$i=0,\ldots,e-1$, we choose an ordered set~$\CB_i$ of vectors in~$L_i$ whose image in~$L_i/L_{i+1}$ is a basis and then the ordered basis obtained by concatenating~$\CB_0,\ldots,\CB_{e-1}$ is as required.

A hereditary order~$\FA$ gives rise to a parahoric subgroup~$U(\FA)=U^0(\FA)=\FA^\times$ of~$\RG$, together with a filtration by normal open subgroups~$U^n(\FA)=1+\FP^n$, for~$n\ge 1$, as well as a compact-mod-centre subgroup~$\FK(\FA)=\{g\in\RG\mid g\FA g^{-1}=\FA\}$, the normalizer of~$\FA$ in~$\RG$. We also get a ``valuation''~$\nu_\FA$ on~$A$ by~$\nu_\FA(x)=\sup\{n\in\BZ\mid x\in\FP^n\}$, and, for~$x\in F$, we have~$\nu_\FA(x)=e(\FA|\Fo_F)\nu_F(x)$.

A \emph{stratum} in~$A$ is a quadruple~$[\FA,n,r,\bet]$, where~$\FA$ is a hereditary~$\Fo_F$-order,~$n\ge r\ge 0$ are integers, and~$\bet\in\FP^{-n}$. Strata~$[\FA,n,r,\bet_i]$, with~$i=1,2$, are called \emph{equivalent} if~$\bet_1-\bet_2\in\FP^{-r}$. Thus, when~$r\ge \left[\frac n2\right]$, the equivalence class of a stratum~$[\FA,n,r,\bet]$ corresponds to a character~$\psi_\bet$ of~$U^{r+1}(\FA)$, trivial on~$U^{n+1}(\FA)$, via
\[
\psi_\bet(x)=\psi_F\circ\tr_{A/F}(\bet(x-1)), \quad\text{for~$x\in U^{r+1}(\FA)$}.
\]

The stratum~$[\FA,n,r,\bet]$ is called~\emph{pure} if~$E=F[\bet]$ is a field with~$E^\times\subseteq\FK(\FA)$, and~$\nu_\FA(\bet)=-n$. A pure stratum~$[\FA,n,r,\bet]$ is called~\emph{simple} if~$r<-k_0(\bet,\FA)$, where~$k_0(\bet,\FA)$ is an invariant whose definition we do not recall here (see~\cite[Definition~1.4.5]{BK93}).

Of particular importance will be simple strata of the form~$[\FA,n,n-1,\bet]$. A pure such stratum is simple if and only if the element~$\bet$ is \emph{minimal} in the sense of~\cite[(1.4.14)]{BK93},
that is:
\begin{enumerate}
\item $\nu_E(\bet)$ is coprime to the ramification index~$e=e(E/F)$; and
\item if $\varpi_F$ is any uniformizer of~$F$, then the image of~$\varpi_F^{-\nu_E(\bet)}\bet^e + \mathfrak{p}_E$ in~$k_E$ generates the residue class extension~$k_E/k_F$.
\end{enumerate}
We call a simple stratum of the form~$[\FA,n,n-1,\bet]$ a
\emph{minimal} stratum. A particular case occurs when in fact~$\bet\in
F$; in this case we call the stratum~$[\FA,n,n-1,\bet]$ a
\emph{scalar} stratum

Finally, we call a pure stratum~$[\FA,n,r,\bet]$ a \emph{max} stratum
if the extension~$E=F[\bet]/F$ is maximal in~$A$ (that is, of
degree~$N$), in which case~$\nu_\FA$ coincides with~$\nu_E$ on~$E$
and~$e(\FA|\Fo_F)=e(E/F)$. We call a max simple
stratum~$[\FA,n,n-1,\bet]$ a \emph{minimax} stratum, in which
case~$n,e(\FA|\Fo_F)$ are coprime.

\medskip

The first step in the construction and classification of the positive depth supercuspidal representations of~$\RG$ in~\cite{BK93} is to prove that any such representation~$\pi$ \emph{contains} a simple stratum~$[\FA,n,n-1,\bet]$, in the sense that~$\Hom_{U^{n}(\FA)}(\psi_\bet,\pi)\ne 0$. The \emph{depth}~$\ell(\pi)$ of~$\pi$ is then the depth~$n/e(\FA|\Fo_F)$ of any such simple stratum; this is independent of any choices, as is the degree of the extension~$E=F[\bet]/F$. In particular, if~$\pi$ contains a minimax stratum~$[\FA,n,n-1,\bet]$, then~$n$ and~$e=e(E/F)$ are determined by the depth~$\ell(\pi)=n/e$, since they are coprime, as is the residue class degree~$f=f(E/F)=N/e$.  For more details, see for example \cite[Proposition 1.14]{KM90}.

\section{Twisting by characters}\label{S:twist}

In this section, we give a modest reduction of Conjecture~$\CJ(N,r)$
to supercuspidal representations which are of minimal depth amongst
all representations obtained from them by twisting by a character.

For~$\pi$ an irreducible representation of~$\RG$ and~$\chi$ a character of~$F^\times$,
we write~$\pi\chi$ for the representation~$\pi\otimes\chi\circ\det$ of~$\RG$.
We say that an irreducible supercuspidal representation~$\pi$ of~$\RG$ is \emph{of minimal depth in its twist class} if
\[
\ell(\pi\chi)\ge \ell(\pi), \text{ for all characters~$\chi$ of~$F^\times$}.
\]
The representation~$\pi$ is of minimal depth in its twist class if and only if it does \emph{not} contain a scalar minimal stratum;
that is, if~$[\FA,n,n-1,\bet]$ is a minimal stratum contained in~$\pi$ then~$F[\bet]$ is a proper extension of~$F$ (see \cite[Remark 1.15]{KM90} and \cite[Lemma 2.4.11]{BK93}).
Note that, in the case that~$N$ is prime, this implies that any
minimal stratum contained in~$\pi$ is a minimax stratum. (One can also
see this directly from the classification of Carayol~\cite{C84} -- see
also~\cite[p209]{B87}.)

We will not recall here the definitions of local factors of pairs of supercuspidal representations from~\cite{JPSS83}. However, from the definitions (see also~\cite[Theorem~2.7]{JPSS83}), a straightforward check shows the following, which is surely well-known.

\begin{lem}\label{lem:transfertwist}
Let~$r$ be a natural number with~$r<N$, let~$\pi,\tau$ be generic irreducible representations of~$\RG,\RG_r$ respectively, let~$\chi$ be a character of~$F^\times$, and let~$s\in\BC$. Then
\begin{eqnarray*}
L(s,\pi\chi\times\tau)&=&L(s,\pi\times\tau\chi), \\
\eps(s,\pi\chi\times\tau,\psi)&=&\eps(s,\pi\times\tau\chi,\psi), \\
\gam(s,\pi\chi\times\tau,\psi)&=&\gam(s,\pi\times\tau\chi,\psi).
\end{eqnarray*}
\end{lem}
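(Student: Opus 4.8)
The plan is to unwind the definitions of the Rankin--Selberg local factors from~\cite{JPSS83} and observe that twisting~$\pi$ by~$\chi\circ\det$ on~$\RG_N$ has exactly the same effect on the integral as twisting~$\tau$ by~$\chi\circ\det$ on~$\RG_r$, because in the local zeta integral the two representations are paired through~$\det$-equivariant data. First I would recall that, for~$r<N$, the relevant integrals take the shape
\[
\Psi(s,W_\pi,W_\tau,\Phi)=\int W_\pi\begin{pmatrix} g & 0 \\ 0 & I_{N-r}\end{pmatrix} W_\tau(g)\,|\det g|^{s-(N-r)/2}\,\ud g
\]
(with an analogous integral involving an extra unipotent integration and a Schwartz function~$\Phi$ when $N-r>1$), the integration being over~$\RU_r\backslash\RG_r$ or~$\RN_r\backslash\RG_r$. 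Replacing~$\pi$ by~$\pi\chi$ replaces~$W_\pi$ by~$g'\mapsto\chi(\det g')W_\pi(g')$; evaluated at~$\mathrm{diag}(g,I_{N-r})$ this multiplies the integrand by~$\chi(\det g)$, which is precisely the factor one gets by replacing~$W_\tau(g)$ by~$\chi(\det g)W_\tau(g)$, i.e.\ by replacing~$\tau$ with~$\tau\chi$. Hence the spaces of zeta integrals for~$(\pi\chi,\tau)$ and for~$(\pi,\tau\chi)$ literally coincide, and the same check applies to the dual integrals~$\tilde\Psi$ built from the contragredients (using that~$\widetilde{\pi\chi}\cong\tilde\pi\chi^{-1}$ and~$\widetilde{\tau\chi}\cong\tilde\tau\chi^{-1}$, so again the~$\chi$-contributions match up).

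From this the three claimed identities follow formally. Since~$L(s,\pi\times\tau)$ is defined (by~\cite[Theorem~2.7]{JPSS83}) as a generator of the fractional ideal spanned by the normalized zeta integrals, the equality of the integral spaces gives
\[
L(s,\pi\chi\times\tau)=L(s,\pi\times\tau\chi).
\]
The functional equation
\[
\Psi(1-s,\widetilde{W},\widetilde{W'},\wh\Phi)=\omega_\tau(-1)^{N-1}\gam(s,\pi\times\tau,\psi)\,\Psi(s,W,W',\Phi)
\]
then shows that~$\gam$ is the ratio of the two matching integral spaces, whence~$\gam(s,\pi\chi\times\tau,\psi)=\gam(s,\pi\times\tau\chi,\psi)$; and~$\eps=\gam\cdot L/L(1-s,\cdot^\vee)$ inherits the identity from the other two. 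One should just be slightly careful with the central-character bookkeeping in the functional equation, since~$\omega_{\pi\chi}=\omega_\pi\chi^N$ while~$\omega_{\tau\chi}=\omega_\tau\chi^r$; but these scalars enter the functional equations for~$(\pi\chi,\tau)$ and~$(\pi,\tau\chi)$ identically because both sides of the functional equation for a given pair involve only~$\omega_\tau$ (resp.\ $\omega_{\tau\chi}$) of the~$\RG_r$-factor, and the shift from~$\tau$ to~$\tau\chi$ there is exactly compensated by evaluating at~$\mathrm{diag}(g,I_{N-r})$, as above. In the edge case~$r=N-1$ (where the Schwartz function~$\Phi$ on the last row is present) and in the general~$r<N-1$ case (extra unipotent integration) the same substitution~$g\mapsto\chi(\det g)$ works verbatim, since the additional integrations do not involve~$\det g$.

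The substantive point is really just to check that no stray~$\chi$-factor is introduced by the measures, the unipotent integrations, or the Fourier transform normalizing~$\Phi\mapsto\wh\Phi$ --- all of which are visibly~$\chi$-independent --- so that the bijection between the two families of zeta integrals is the identity. I expect the only ``obstacle'' to be notational: writing the three variants of the integral (for $r=N-1$, for general $r<N-1$, and the dual integrals) carefully enough that the substitution is manifestly the same in each case. Since the paper explicitly declines to reproduce the definitions and calls the statement ``surely well-known,'' I would keep this at the level of: recall the integral, perform the substitution~$W_\pi(\mathrm{diag}(g,I_{N-r}))\,W_\tau(g)=W_\pi(\mathrm{diag}(g,I_{N-r}))\,\chi(\det g)^{-1}\cdot\chi(\det g)W_\tau(g)$, invoke~\cite[Theorem~2.7]{JPSS83} for~$L$, and read off~$\gam$ and~$\eps$ from the functional equation.
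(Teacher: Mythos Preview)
Your proposal is correct and follows exactly the approach the paper indicates: the paper does not actually write out a proof but simply says that ``from the definitions (see also~\cite[Theorem~2.7]{JPSS83}), a straightforward check shows'' the lemma, and your argument is precisely that straightforward check, carried out in more detail than the paper bothers to record.
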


We also recall that the depth~$\ell(\pi)$ of an irreducible
supercuspidal representation can be determined from the conductor
of the standard epsilon
factor~$\eps(s,\pi,\psi)=\eps(s,\pi\times\boI,\psi)$, where~$\boI$ is
the trivial representation of~$\RG_1$ (see~\cite{B87}); indeed the same
is true for an arbitrary discrete series representation~$\pi$,
by~\cite[Theorem 3.1]{LR03}.

Now we can reduce Conjecture~$\CJ(N,r)$ to the following special case:

\begin{Jconjo}
If~$\pi_1,\pi_2$ are irreducible supercuspidal representations
of~$\RG$ of minimal depth in their twist class which
satisfy hypothesis~$\CH_{\le r}$, then~$\pi_1\simeq\pi_2$.
\end{Jconjo}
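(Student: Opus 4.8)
The idea is to reduce Conjecture~$\bJ_{\boldsymbol 0}(\bN,\br)$ --- in the range $r\ge[N/2]$, where it is expected to hold --- to the construction of a \emph{special pair} of Whittaker functions, and then to invoke Proposition~\ref{prop:SPimpliesJ}. So let $\pi_1,\pi_2$ be irreducible supercuspidal representations of $\RG$, of minimal depth in their twist class, satisfying $\CH_{\le r}$ with $r\ge[N/2]$; we want $\pi_1\simeq\pi_2$.

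First I would record the invariants that $\CH_1$ forces on $\pi_1,\pi_2$. By~\cite[Corollary~2.7]{JNS13} they have the same central character, so, twisting both by a suitable common unramified character --- which affects neither ``being of minimal depth in its twist class'' (an unramified twist does not change the depth) nor, by Lemma~\ref{lem:transfertwist}, the hypothesis $\CH_{\le r}$ --- we may assume in addition that $\pi_1,\pi_2$ are unitarizable. Applying $\CH_1$ to the trivial character of $\RG_1$ gives $\gam(s,\pi_1,\psi)=\gam(s,\pi_2,\psi)$; since a supercuspidal representation of $\RG$ has trivial $L$-factor (as $N\ge2$), this is an equality of standard $\eps$-factors, hence of their conductors, and so $\pi_1,\pi_2$ have a common depth $\ell$.

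Next I would split on $\ell$. If $\ell=0$, then $\pi_1,\pi_2$ are of depth zero, for which~\cite{JNS13} already produces a special pair of Whittaker functions, and Proposition~\ref{prop:SPimpliesJ} gives $\pi_1\simeq\pi_2$. If $\ell>0$, each $\pi_i$ contains a minimal stratum $[\FA_i,n_i,n_i-1,\bet_i]$; since $\pi_i$ is of minimal depth in its twist class, this stratum is non-scalar, so $E_i:=F[\bet_i]$ is a proper field extension of $F$. As $V=F^N$ is an $E_i$-vector space we have $[E_i:F]\mid N$, and hence --- when $N$ is prime --- $E_i$ is maximal, so the stratum is a \emph{minimax} stratum. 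In that case the main new result of the paper (Proposition~\ref{prop:minimax}), applied to $\pi_1,\pi_2$ which have the same positive depth $\ell$ and the same central character, yields a special pair of Whittaker functions, and Proposition~\ref{prop:SPimpliesJ} again concludes. Putting the two cases together establishes $\bJ_{\boldsymbol 0}(\bN,\br)$ for $N$ prime and $r\ge[N/2]$; taking $r=[N/2]$ and combining with the reduction of this section recovers Jacquet's conjecture for prime $N$ (Corollary~\ref{cor:Jprime}).

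The main obstacle --- and the only thing blocking general $N$ --- is the positive-depth, \emph{non-minimax} case: when $1<[E_i:F]<N$, or $[E_i:F]=N$ but $\bet_i$ is not minimal (so $-k_0(\bet_i,\FA_i)>1$), there is at present no construction of a special pair of Whittaker functions, and the natural inductive step --- peeling the stratum $[\FA_i,n_i,n_i-1,\bet_i]$ off $\pi_i$ and comparing it with a supercuspidal of a smaller general linear group in the manner of Bushnell--Kutzko --- does not obviously descend to the Whittaker model. (Some lower bound on $r$ is unavoidable: by the examples of~\cite{Ch96}, $\bJ_{\boldsymbol 0}(\bN,\br)$ already fails for composite $N$ and $r=1$.) Treating the non-minimax case is exactly what a proof for arbitrary $N$ would need, and the methods sketched here do not do so.
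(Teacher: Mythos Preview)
Your proposal is correct and follows essentially the same route as the paper (see the proof of Corollary~\ref{cor:Jprime}): use~$\CH_1$ to equate central character and depth, handle depth zero via~\cite{JNS13}, and in positive depth with~$N$ prime pass to a minimax stratum, apply Proposition~\ref{prop:minimax}, then Proposition~\ref{prop:SPimpliesJ}. Your explicit unramified twist to arrange unitarizability is a step the paper leaves implicit (it simply begins with unitarizable supercuspidals, the reduction being standard), but is correctly justified.

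One small slip in your final paragraph: since the stratum~$[\FA_i,n_i,n_i-1,\bet_i]$ you have extracted is a \emph{simple} stratum of this shape, the element~$\bet_i$ is minimal over~$F$ by definition, so the case ``$[E_i:F]=N$ but~$\bet_i$ is not minimal'' cannot occur. At the level of the contained minimal stratum, the only obstruction for composite~$N$ is~$1<[E_i:F]<N$; the further difficulties you allude to (with~$k_0$) concern the refined simple stratum~$[\FA,n,0,\bet']$ used in the type construction, not the minimal stratum itself.
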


\begin{prop}\label{prop:redmin}
For~$1\le r<N$, Conjecture~$\CJ_0(N,r)$ is equivalent to Conjecture~$\CJ(N,r)$.
\end{prop}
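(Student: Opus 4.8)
The plan is to prove the equivalence by showing each conjecture implies the other, the forward implication being trivial and the reverse implication exploiting the twisting invariance of gamma factors recorded in Lemma~\ref{lem:transfertwist}.

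\begin{proof}[Proof sketch]
Since any supercuspidal representation of minimal depth in its twist class is in particular a supercuspidal representation, Conjecture~$\CJ(N,r)$ immediately implies Conjecture~$\CJ_0(N,r)$. For the converse, suppose Conjecture~$\CJ_0(N,r)$ holds and let~$\pi_1,\pi_2$ be irreducible supercuspidal representations of~$\RG$ satisfying hypothesis~$\CH_{\le r}$. Choose a character~$\chi_i$ of~$F^\times$ such that~$\pi_i':=\pi_i\chi_i$ is of minimal depth in its twist class; this is possible since the depths~$\ell(\pi_i\chi)$, as~$\chi$ ranges over characters of~$F^\times$, attain a minimum (for instance because the depth is bounded below by~$0$ and takes values in~$\tfrac1N\BZ$). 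The first step is to show that~$\pi_1',\pi_2'$ again satisfy hypothesis~$\CH_{\le r}$. Indeed, for any irreducible supercuspidal representation~$\tau$ of~$\RG_m$ with~$1\le m\le r$, Lemma~\ref{lem:transfertwist} gives
\[
\gam(s,\pi_i\chi_i\times\tau,\psi)=\gam(s,\pi_i\times\tau\chi_i,\psi),
\]
and~$\tau\chi_i$ is again an irreducible supercuspidal representation of~$\RG_m$, so the hypothesis~$\CH_m$ for~$(\pi_1,\pi_2)$ applied to the twisted representation~$\tau\chi_i$ yields~$\gam(s,\pi_1\chi_i\times\tau,\psi)=\gam(s,\pi_2\chi_i\times\tau,\psi)$. (One small point to check: the same character~$\chi_i$ must be used on both sides, so one should observe that~$\CH_m$ for~$(\pi_1,\pi_2)$ is invariant under replacing~$\tau$ by~$\tau\chi$ for a fixed~$\chi$ — which is exactly what the displayed identity gives.) There is a minor subtlety here, namely that a priori~$\chi_1$ and~$\chi_2$ need not coincide, so one does not directly get~$\CH_m$ for the pair~$(\pi_1',\pi_2')$; I will address this below.

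To handle this, the cleanest route is to first reduce to the case~$\chi_1=\chi_2$. By~\cite[Corollary~2.7]{JNS13}, hypothesis~$\CH_1$ forces~$\pi_1$ and~$\pi_2$ to have the same central character, hence in particular the same depth, and more importantly one can arrange the twisting characters to agree: if~$\pi_i$ contains a scalar minimal stratum~$[\FA,n,n-1,\beta_i]$ with~$\beta_i\in F$, the twist~$\chi_i$ that lowers the depth is (up to a lower-depth adjustment) determined by~$\beta_i$, and the equality of central characters plus~$\CH_1$ pins these down compatibly. More robustly, one can simply argue as follows: having fixed~$\chi_1$ so that~$\pi_1':=\pi_1\chi_1$ is of minimal depth in its twist class, set~$\pi_2':=\pi_2\chi_1$ (the \emph{same} character). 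By the argument of the previous paragraph,~$\pi_1'$ and~$\pi_2'$ satisfy~$\CH_{\le r}$. Now~$\pi_2'$ need not yet be of minimal depth in its twist class; but if it is not, then~$\pi_1'$ and~$\pi_2'$ already have different depths, which is impossible since~$\CH_1$ (which they satisfy) forces equal central characters and one checks via~\cite{B87} and~\cite[Theorem~3.1]{LR03} that the depth is read off from the conductor of~$\eps(s,\pi_i'\times\tau,\psi)$ for suitable~$\tau$ of degree~$\le r$ — in fact~$\CH_1$ already determines whether a scalar minimal stratum is present. Hence~$\pi_2'$ is automatically also of minimal depth in its twist class.

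With~$\pi_1',\pi_2'$ both of minimal depth in their twist class and satisfying~$\CH_{\le r}$, Conjecture~$\CJ_0(N,r)$ yields~$\pi_1'\simeq\pi_2'$, that is~$\pi_1\chi_1\simeq\pi_2\chi_1$, and twisting back by~$\chi_1^{-1}$ gives~$\pi_1\simeq\pi_2$, proving Conjecture~$\CJ(N,r)$. The main obstacle, and the step requiring genuine care rather than formal manipulation, is the middle one: ensuring that a single twisting character can be used for both representations so that the hypothesis~$\CH_{\le r}$ genuinely transfers to the pair~$(\pi_1',\pi_2')$. This rests on the fact that~$\CH_1$ already controls the central character and hence the ``scalar part'' of the minimal stratum, so that making~$\pi_1$ of minimal depth in its twist class automatically does the same for~$\pi_2$; the cited results~\cite[Corollary~2.7]{JNS13}, \cite[Remark~1.15]{KM90}, \cite[Lemma~2.4.11]{BK93} and the conductor–depth relation~\cite{B87},~\cite[Theorem~3.1]{LR03} are exactly what is needed to make this precise.
\end{proof}
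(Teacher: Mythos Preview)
Your overall approach matches the paper's: twist both representations by the same character~$\chi$ so that~$\pi_1\chi$ is of minimal depth in its twist class, observe via Lemma~\ref{lem:transfertwist} that~$(\pi_1\chi,\pi_2\chi)$ still satisfies~$\CH_{\le r}$, deduce that~$\pi_2\chi$ is also of minimal depth, apply~$\CJ_0(N,r)$, and untwist. The unnecessary detour via separate characters~$\chi_1,\chi_2$ and the discussion of scalar strata can simply be deleted.

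However, the sentence ``if it is not, then~$\pi_1'$ and~$\pi_2'$ already have different depths'' is false as written and is the one genuine gap. The pair~$(\pi_1',\pi_2')$ satisfies~$\CH_1$, so~$\pi_1'$ and~$\pi_2'$ have the \emph{same} depth; the fact that~$\pi_2'$ might admit a further depth-lowering twist does not contradict this. What you actually need is the stronger statement that~$\ell(\pi_1\chi)=\ell(\pi_2\chi)$ for \emph{every} character~$\chi$, not just for~$\chi=\chi_1$. This follows immediately from your own computation~\eqref{eqn:gamtwist} (with~$\tau=\boI$) together with the conductor--depth relation: for each~$\chi$, the pair~$(\pi_1\chi,\pi_2\chi)$ satisfies~$\CH_1$, hence has equal depth. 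Once you know this, the sets~$\{\ell(\pi_1\chi):\chi\}$ and~$\{\ell(\pi_2\chi):\chi\}$ coincide pointwise, so if~$\chi_1$ minimizes the first it automatically minimizes the second. This is exactly how the paper argues, in one clean line; your version reaches for the right ingredients but states the contradiction at the wrong pair of representations.
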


\begin{proof} It is clear that Conjecture~$\CJ(N,r)$ implies
Conjecture~$\CJ_0(N,r)$. For the converse, we assume that
Conjecture~$\CJ_0(N,r)$ is true, and let~$\pi_1,\pi_2$ be irreducible
supercuspidal representations
of~$\RG$ which satisfy hypothesis~$\CH_{\le r}$. Then, for~$\chi$ any character
of~$F^\times$ and~$\tau$ any supercuspidal representation of~$\RG_m$
with~$1\le m\le r$, Lemma~\ref{lem:transfertwist} and property~$\CH_{\le r}$
imply that
\begin{equation}\label{eqn:gamtwist}
\left.\begin{array}{rclll}
\gam(s,\pi_1\chi\times\tau,\psi)& =& \gam(s,\pi_1\times\tau\chi,\psi)&& \\
&=&\gam(s,\pi_2\times\tau\chi,\psi)
& =& \gam(s,\pi_2\chi\times\tau,\psi).
\end{array}\right.
\end{equation}
In particular, using the case~$m=1$ with~$\tau=\boI$ the trivial
representation, this implies that~$\ell(\pi_1\chi)=\ell(\pi_2\chi)$.

Now we pick a character~$\chi$ of~$F^\times$ such that~$\pi_1\chi$ is of
minimal depth in its twist class, that is, such
that~$\ell(\pi_1\chi)$ is minimal in~$\{\ell(\pi_1\chi)\mid\chi\text{
  a character of }F^\times\}$. Then~$\pi_2\chi$ is also of minimal
depth in its twist class
and~\eqref{eqn:gamtwist} now implies that the
representations~$\pi_1\chi,\pi_2\chi$ satisfy hypothesis~$\CH_{\le r}$. Thus, by the
assumption that Conjecture~$\CJ_0(N,r)$ is true, we deduce
that~$\pi_1\chi\simeq\pi_2\chi$, whence~$\pi_1\simeq\pi_2$ as required.
\end{proof}

\section{Unipotent and mirabolic subgroups}\label{S:unip}

Although we have fixed standard mirabolic and maximal unipotent subgroups, it will be convenient in the sequel to allow these to vary, working in the basis-free setting of Section~\ref{S:strata}. Thus, in this section, we gather some notation for arbitrary mirabolic and maximal unipotent subgroups.

A maximal flag in~$V$
\[
\CF = \{ 0=V_0\subset V_1\subset \cdots \subset V_{N-1}\subset V_N=V\},
\]
with~$\dim_F(V_i)=i$, determines both a maximal unipotent subgroup~$U_\CF$ and a mirabolic subgroup~$P_\CF$ by
\begin{eqnarray*}
U_\CF&=&\{g\in \RG\mid (g-1)V_i\subseteq V_{i-1},\text{ for }1\le i\le N\},\\
P_\CF&=&\{g\in \RG\mid (g-1)V \subseteq V_{N-1}\}.
\end{eqnarray*}
Of course,~$P_\CF$ does not depend on the whole flag~$\CF$, but~$U_\CF$ does: there is a bijection between maximal flags in~$V$ and maximal unipotent subgroups of~$\RG$.

Given now an ordered basis~$\ScB=(v_1,\ldots,v_N)$ of~$V$ we get a decomposition~$V=\bigoplus_{i=1}^N W_i$, where~$W_i=\langle v_i\rangle_F$ is the~$F$-linear span of~$v_i$. We set~$A_{ij}=\Hom_F(W_j,W_i)$ so that~$A=\bigoplus_{1\le i,j\le N}A_{ij}$, and define~$\boI_{ij}\in A_{ij}$ by~$\boI_{ij}(v_j)=v_i$. Thus saying that~$a=(a_{ij})$ is the matrix of some~$a\in A$ with respect to~$\ScB$, is the same as saying
\[
a=\sum_{1\le i,j\le N} a_{ij}\boI_{ij}.
\]
We also get a maximal flag~$\CF_{\ScB}$ by setting
\[
V_i=\bigoplus_{j=1}^i W_j=\langle v_1,\ldots,v_i\rangle_F.
\]
We denote the corresponding unipotent subgroup and mirabolic by~$U_{\ScB}$ and~$P_{\ScB}$ respectively.
Finally, we get a nondegenerate character~$\psi_{\ScB}$ of~$U_{\ScB}$, given by
\[
\psi_{\ScB}(u)=\psi_F\left(\sum_{i=1}^{N-1} u_{i,i+1}\right),
\]
where~$u\in U_{\ScB}$ and $(u_{ij})$ is the matrix of~$u$ with respect to the basis~$\ScB$. The same formula also defines a function~$\psi_{\ScB}$ on~$P_{\ScB}$ (though it is not a character).

The standard mirabolic subgroup, maximal unipotent subgroup, and nondegenerate character, are given by choosing~$\ScB$ to be the standard basis of~$V=F^N$.

\section{Minimax strata}\label{S:minimax}

In this section, for a minimax stratum~$[\FA,n,n-1,\bet]$, we examine
the relationship between the basis with respect to which~$\bet$ is in
companion form (and the associated mirabolic subgroup) and the
order~$\FA$.

We begin in the max setting (but not necessarily minimax). Suppose~$\bet\in A$ is such that~$E=F[\bet]$ is a field extension of~$F$ of maximal degree~$N$. We define the \emph{function}~$\psi_\bet$ of~$A$ by
\[
\psi_\bet(x)=\psi_F\circ\tr_{A/F}(\bet(x-1)), \quad\text{ for~$x\in A$.}
\]
There is an ordered basis~$\ScB=(v_1,\ldots,v_N)$ for~$V$ with respect to which
\[
\psi_\beta|{U_\ScB} = \psi_{\ScB}
\]
is the nondegenerate character associated to~$\ScB$. Indeed, there is, up to~$E^\times$-conjugacy, a unique maximal unipotent subgroup~$U$ such that~$\psi_\beta$ is trivial on the derived group~$U^{\der}$, and we have~$U_{\ScB}=U$. More explicitly, if~$v_1\in V$ is arbitrary, then putting
\[
v_j=\bet^{j-1}v_1,\text{ for }2\le j\le N,
\]
gives a basis as required, and every such basis arises in this way. With respect to the basis~$\ScB$, the matrix of~$\bet$ is the companion matrix of the minimum polynomial of~$\bet$. (See~\cite[Section~2]{BH98} for all of this.) The crucial (though trivial) observation is that we also have an equality of functions (not characters)
\begin{equation}\label{eqn:psiBonP}
\psi_\beta|{P_\ScB} = \psi_{\ScB}.
\end{equation}

Since~$E/F$ is maximal, there is a unique hereditary~$\Fo_F$-order~$\FA$ in~$A$ normalized by~$E^\times$; more precisely, it is given by the~$\Fo_F$-lattice chain~$\{\Fp_E^i\mid i\in\BZ\}$ so we have
\[
\FA=\{x\in A\mid x\Fp_E^i\subseteq\Fp_E^i,\hbox{ for all }i\in\BZ\}.
\]
It has~$\Fo_F$-period~$e(E/F)$, and consequently~$\nu_\FA(\bet)=\nu_E(\bet)$. We assume~$n=-\nu_\FA(\bet)>0$ so that~$[\FA,n,0,\bet]$ is a pure stratum, and then the restriction of the function~$\psi_\bet$ defines a character of~$U^{\left[\frac n2\right]+1}(\FA)$; moreover, by~\eqref{eqn:psiBonP}, we have an equality of characters
\begin{equation}\label{eqn:psiBonPcap}
\psi_\beta|{U^{\left[\frac n2\right]+1}(\FA)\cap P_\ScB} = \psi_{\ScB}.
\end{equation}
Now we have the following:

\begin{lem}\label{lem:maxintP}
Suppose~$[\FA,n,0,\bet]$ is a max pure stratum and put~$B=C_A(E)$ and~$\FB=\FA\cap B$, an~$\Fo_E$-hereditary order in~$B$. Let~$P$ be \emph{any} mirabolic subgroup of~$\RG$. Then, for any integer~$m\ge 1$, we have
\[
\left(U^m(\FB)U^n(\FA)\right)\cap P = U^n(\FA)\cap P.
\]
\end{lem}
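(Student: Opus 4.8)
The plan is to analyse the intersection inside $B=C_A(E)$, using the key fact that $E/F$ is maximal, so $B=E$ and hence $U^m(\FB)=U^m(\FO_E)=1+\Fp_E^m$ consists of scalars in the sense that it lies in $E^\times\subseteq\FK(\FA)$. The inclusion $\supseteq$ is trivial since $U^n(\FA)\subseteq U^m(\FB)U^n(\FA)$. For $\subseteq$, take $g\in\bigl(U^m(\FB)U^n(\FA)\bigr)\cap P$ and write $g=zu$ with $z\in U^m(\FB)=1+\Fp_E^m\subseteq E^\times$ and $u\in U^n(\FA)=1+\FP^n$. The point is that $z$ acts as the scalar $z\in E^\times$ on $V$; since $u\in 1+\FP^n$ has determinant (or, more elementarily, acts on $V_{N-1}$) congruent to $1$ in the appropriate sense, the condition $g\in P$ forces $z$ itself to lie in $P$, and a scalar lying in a mirabolic subgroup must be $1$.

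First I would make that last step precise. Recall $P=P_\CF$ for some maximal flag $\CF$, so $P=\{h\in\RG\mid (h-1)V\subseteq V_{N-1}\}$. A scalar $z\in F^\times\subseteq E^\times$ acting on $V$ satisfies $(z-1)V=V$ unless $z=1$, so the only scalar in $P$ is $1$. Thus it suffices to show that if $zu\in P$ with $z\in E^\times$ a scalar and $u\in U^n(\FA)=1+\FP^n$, then $z\in P$, i.e. $z=1$ and $g=u\in U^n(\FA)\cap P$. To see $z=1$: reducing modulo $V_{N-1}$, the element $g=zu$ acts on the one-dimensional quotient $V/V_{N-1}$ as multiplication by the image of $g$; since $g\in P$ this image is $1$. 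On the other hand $u=1+x$ with $x\in\FP^n$, and $n\ge 1$ so $\FP^n\subseteq\FP\subsetneq\FA$; in particular $x$ maps $V$ into $L_1\subsetneq V$ (after suitable normalisation of the lattice chain) — more robustly, $u\in U^n(\FA)\subseteq U^1(\FA)=1+\FP$, and every element of $1+\FP$ has $\det\equiv 1\pmod{\Fp_F}$, hence acts on $V/V_{N-1}$ (a line) by a scalar in $1+\Fp_F$. Combining, $z\equiv z\cdot 1\equiv (\text{action of }g\text{ on }V/V_{N-1})\equiv 1\pmod{\Fp_F}$ only shows $z\in 1+\Fp_F$; to nail $z=1$ one should argue instead purely multiplicatively inside $P\cdot U^n(\FA)$.

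So the cleaner route, which I would actually carry out: observe $U^n(\FA)$ is normal in $U(\FA)=\FA^\times$, hence $U^m(\FB)U^n(\FA)$ is a group (as $U^m(\FB)$ normalises $\FA$ and $\FP$), and the natural map $U^m(\FB)\to \bigl(U^m(\FB)U^n(\FA)\bigr)/U^n(\FA)$ is surjective with kernel $U^m(\FB)\cap U^n(\FA)=U^{\max(m,n)}(\FB)$ (since $\FB=\FO_E$ and $\FP^n\cap E=\Fp_E^n$, as $\nu_\FA$ restricts to $\nu_E$ on $E$). Thus every coset of $U^n(\FA)$ in the product has a representative in $E^\times\cap U(\FA)=\FO_E^\times$, in fact in $1+\Fp_E$. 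Now if $g\in P$, pick such a representative $z\in 1+\Fp_E$ with $g\in zU^n(\FA)$; then $z^{-1}g\in U^n(\FA)$ and $z^{-1}g\in z^{-1}P$. Since $P\subseteq\{h\mid \det h\equiv 1\}$ is false in general — wait, $P$ does contain the full $V_{N-1}$-block so $\det$ is unconstrained on $P$ — I instead use: the scalar $z$ commutes with everything, so $z^{-1}g\in P$ iff $z\in P$ (as $P$ is a group and $g\in P$), iff $z=1$ by the scalar-in-mirabolic observation. Hence $g=z^{-1}g\cdot z = z^{-1}g\in U^n(\FA)\cap P$, giving $\subseteq$.

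The main obstacle is purely bookkeeping: making sure that $\FB=\FA\cap B$ really is $\FO_E$ (this uses maximality of $E$, stated in the excerpt, so $B=C_A(E)=E$), and that $U^m(\FB)$ genuinely lands in $E^\times$ so that its elements are scalars that commute with $U^n(\FA)$ — this is what makes the product a group and lets the scalar argument go through for an \emph{arbitrary} mirabolic $P$, not just $P_\ScB$. Once that is in place, the only non-formal input is the elementary fact that a nontrivial scalar cannot lie in any mirabolic subgroup, which is immediate from the defining condition $(h-1)V\subseteq V_{N-1}$.
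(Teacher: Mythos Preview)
Your proposal has a genuine logical gap at the crucial step. You correctly identify that $B=E$, $\FB=\Fo_E$, and that $E^\times\cap P=\{1\}$ for any mirabolic $P$ (since for $z\in E^\times$ the image $(z-1)V$ is an $E$-submodule of the one-dimensional $E$-space $V$, hence either $0$ or all of $V$). However, the deduction in your ``cleaner route'' does not go through. You write $g=zu\in P$ with $z\in 1+\Fp_E^m$ and $u=z^{-1}g\in U^n(\FA)$, establish the chain of equivalences $u\in P\Longleftrightarrow z\in P\Longleftrightarrow z=1$, and then assert $z=1$. But none of these equivalent statements has been verified: you do not know $u\in P$, so you cannot conclude $z\in P$, hence cannot conclude $z=1$. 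In fact the conclusion $z=1$ is too strong to be true: when $m<n$ the decomposition $g=zu$ is not unique (as you yourself note, the ambiguity is $U^{\max(m,n)}(\FB)$), and for any $g\in U^n(\FA)\cap P$ every $z\in 1+\Fp_E^n$ gives a valid decomposition. So applying the scalar-in-mirabolic fact to $z$ would require as input precisely what you are trying to prove.

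The paper circumvents this by working additively and one step at a time. It shows, for each $m\ge 1$, that $(\Fp_E^m+\FP^{m+1})\cap\CP\subseteq\FP^{m+1}$, where $\CP=\{p-1:p\in P\}$; iterating from $m$ to $n-1$ then gives the lemma. For the single step: if $w=x+y\in\CP$ with $x\in\Fp_E^m$ and $y\in\FP^{m+1}$, then $w$ has $0$ as an eigenvalue (this is the \emph{only} feature of $\CP$ used), hence so does $\varpi_E^{-m}w\in\Fo_E+\FP$, hence so does its image in $\FA/\FP$, which lies in $k_E$. Since $k_E$ is a field, that image must be $0$, so $w\in\FP^{m+1}$. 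This uses the same underlying idea you isolated (nonzero elements of the field are invertible), but applies it to the reduction of $g-1$ in $\FA/\FP$ rather than to the factor $z$ in $\RG$, and replaces ``lies in $P$'' by the weaker ``has eigenvalue $0$'' --- a property that \emph{is} inherited under scaling and reduction, unlike membership in $P$.
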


\begin{proof}
Notice that actually~$B=E$ and~$\FB=\Fo_E$ in this situation. We pick an arbitrary uniformizer~$\varpi_E$ for~$E$. We prove that, for any~$m\ge 1$,
\[
\left(U^m(\FB)U^{m+1}(\FA)\right)\cap P = U^{m+1}(\FA)\cap P,
\]
and the result follows by iteration. This claim is equivalent to the following additive statement: setting~$\CP=\{p-1\mid p\in P\}$, we have
\[
\left(\Fp_E^m+\FP^{m+1}\right)\cap\CP = \FP^{m+1}\cap\CP,
\]
where~$\FP=\rad(\FA)$ as usual. So suppose~$x\in\Fp_E^m$ and~$y\in\FP^{m+1}$ are such that~$z:=x+y\in\CP$. In particular,~$z$ has eigenvalue~$0$, and the same is then true of~$\varpi_E^{-m}z\in\FO_E+\FP$, and of its image in~$\FA/\FP$. However, this image is in~$k_E\incl \FA/\FP$, and the only element of~$k_E$ with eigenvalue~$0$ is~$0$ itself. Thus~$\varpi_E^{-m}z\in\FP$ and~$z\in\FP^{m+1}$, as required.
\end{proof}

If~$\ScB=(v_1,\ldots,v_N)$ is an ordered basis, we put~$W_i=\langle
v_i\rangle_F$ and~$A_{ij}=\End_F(W_j,W_i)$, as before.
We say that~$\ScB$ is a \emph{splitting basis for~$\FA$} if
\[
\FA=\bigoplus_{1\le i,j\le N} \left(\FA\cap A_{ij}\right).
\]
In particular, any basis with respect to which~$\FA$ is in standard form is a splitting basis. Any permutation of a splitting basis~$\ScB$ is also a splitting basis; more generally, any basis obtained by a monomial change of basis from~$\ScB$ is a splitting basis.

Now we specialize to the case of a minimax stratum~$[\FA,n,n-1,\bet]$; in this case, we prove that any basis~$\ScB$ with respect to which~$\psi_\beta$ defines a character of~$U_\ScB$ is also a splitting basis for~$\FA$.

\begin{lem}\label{lem:minimaxsplit}
Suppose~$[\FA,n,n-1,\bet]$ is a minimax stratum and let~$\ScB=(v_1,\ldots,v_N)$ be an ordered basis for~$V$ such that~$\psi_\bet$ is trivial on~$U_{\ScB}^{\der}$. Then~$\ScB$ is a splitting basis for~$\FA$. Moreover, writing~$W_i=\langle v_i\rangle_F$ and~$A_{ij}=\End_F(W_j,W_i)$, for each~$1\le i,j\le N$ the lattice~$\FA\cap A_{ij}$ depends only on the depth~$n/e(\FA|\Fo_F)$ of the stratum.
\end{lem}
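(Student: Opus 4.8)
The plan is to reduce everything to an explicit computation inside the companion-matrix basis, using the fact that a minimax stratum is built on a totally maximal field extension~$E=F[\beta]$ of degree~$N$, for which~$\FA$ is the unique~$\Fo_F$-order normalized by~$E^\times$, given by the lattice chain~$\{\Fp_E^i\}$. First I would recall from the discussion preceding Lemma~\ref{lem:maxintP} (and~\cite[Section~2]{BH98}) that, up to~$E^\times$-conjugacy, there is a \emph{unique} maximal unipotent subgroup~$U$ on which~$\psi_\beta$ restricts to a character, and that the bases~$\ScB'$ with~$U_{\ScB'}=U$ are exactly those of the form~$v_j=\beta^{j-1}v_1$ for an arbitrary~$v_1\in V$; with respect to such a basis the matrix of~$\beta$ is the companion matrix of its minimal polynomial. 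Since~$U_{\ScB}=U_{\ScB'}$ forces~$\ScB$ and~$\ScB'$ to differ by an element of the standard upper-triangular Borel (indeed by an element of~$U$ together with a diagonal scaling), and since being a splitting basis for~$\FA$ and the lattices~$\FA\cap A_{ij}$ are unchanged under a monomial change of basis, it suffices to treat one such standard companion basis~$\ScB=(v_1,\beta v_1,\ldots,\beta^{N-1}v_1)$; I would also observe that rescaling~$v_1$ rescales each~$W_i$ but not~$\FA\cap A_{ij}\subseteq A_{ij}$ up to the canonical identification, so the lattices depend only on~$\FA$ (hence only on~$E/F$, hence only on~$n/e$).

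Next I would make the companion picture fully concrete. Write~$e=e(E/F)$ and~$f=f(E/F)=N/e$, coprime to~$n$ by the minimax hypothesis; fix a uniformizer~$\varpi_E$ of~$E$ with~$\nu_E(\beta)=-n$, so~$\beta^e$ and~$\varpi_E^{-n}$ differ by a unit whose image generates~$k_E/k_F$ (this is the minimality condition~(1.4.14) of~\cite{BK93}). With respect to the~$F$-basis~$\ScB$ of~$V=E$, the order~$\FA=\{x\in A\mid x\Fp_E^i\subseteq\Fp_E^i\}$ is described by the~$\Fo_F$-lattice chain~$L_i=\Fp_E^i$; choosing~$v_1$ so that~$v_1\in\Fo_E$ generates~$\Fo_E/\Fp_E$ over~$k_F$ together with~$\beta$ (possible by minimality, since then~$1,\beta,\ldots,\beta^{N-1}$ give an~$\Fo_F$-basis adapted to the chain in the sense of the standard-form construction recalled in Section~\ref{S:strata}), I get that~$\ScB$ is a basis putting~$\FA$ in standard form, hence a splitting basis, which proves the first assertion. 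For the second assertion I would then simply read off~$\FA\cap A_{ij}$ from this standard form: identifying~$A_{ij}=\Hom_F(W_j,W_i)\cong F\cdot\boI_{ij}$, one has~$\FA\cap A_{ij}=\Fp_F^{c_{ij}}\boI_{ij}$ where~$c_{ij}$ is determined by how far the $(i,j)$ block sits below or above the diagonal in the block-upper-triangular-mod-$\Fp_F$ pattern — explicitly, indexing~$1\le i,j\le N$ and reducing~$i,j$ modulo the period, $c_{ij}=0$ if~$i\le j$ within a period-block and~$c_{ij}=1$ otherwise; in all cases~$c_{ij}$ depends only on~$e$ and~$f$, equivalently only on~$n/e$. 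I would record this formula cleanly (perhaps using~$\lceil (j-i)/f\rceil$ or a similar closed form) and note it matches~$\nu_\FA$ restricted to each~$A_{ij}$.

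The main obstacle, and the step I would be most careful about, is the reduction from an \emph{arbitrary} basis~$\ScB$ with~$\psi_\beta$ trivial on~$U_{\ScB}^{\der}$ to a companion basis of the specific shape above: I need that the uniqueness statement for~$U$ really does pin down~$\ScB$ up to a monomial transformation preserving the splitting-basis property and the lattices~$\FA\cap A_{ij}$. Concretely, two bases giving the same flag~$\CF_{\ScB}$ differ by an upper-triangular matrix, and imposing that~$\beta$ acts as a companion matrix in both forces that matrix to be diagonal; a diagonal change of basis permutes nothing but does rescale the~$W_i$, and I must check that~$\FA\cap A_{ij}$, viewed as a subset of the intrinsic space~$A_{ij}=\Hom_F(W_j,W_i)$, is genuinely unaffected — which it is, since~$\FA$ is fixed and the diagonal matrix conjugates~$A_{ij}$ to itself by the scalar~$d_i d_j^{-1}$, and one checks this scalar is a unit because the diagonal entries~$d_i$ all have the same~$\Fo_F$-lattice-chain level (they take~$v_1$-level vectors to $v_1$-level vectors). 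Once this bookkeeping is in place, the rest is the explicit standard-form computation, which is routine. A short alternative for the first assertion, worth mentioning, is to invoke Lemma~\ref{lem:maxintP}-style reasoning: $\beta$ generates~$E$ over~$F$ and~$\FA$ is the unique $E^\times$-normalized order, so $\Fo_E[\beta]\cdot v_1 = \Fo_F$-span of $\ScB$ meets each block $A_{ij}$ in the expected lattice by a direct eigenvalue argument as in that lemma's proof.
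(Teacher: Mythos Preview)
Your strategy matches the paper's --- reduce to the companion basis~$\ScB=(1,\beta,\ldots,\beta^{N-1})$ and compute explicitly --- but the central claim is wrong. You assert that a suitable companion basis puts~$\FA$ in standard form; this fails whenever~$e=e(E/F)>1$. Standard form requires the first~$f$ basis vectors to reduce to a basis of~$L_0/L_1$, the next~$f$ to a basis of~$L_1/L_2$, and so on; but~$\nu_E(\beta^{j-1})=-(j-1)n$, so as~$j$ runs from~$1$ to~$N$ the residues of these valuations modulo~$e$ cycle through~$0,-n,-2n,\ldots$, which (since~$\gcd(n,e)=1$) hits each class exactly~$f$ times but \emph{not} in consecutive runs. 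Consequently your formula~$c_{ij}\in\{0,1\}$ and the claim that~$c_{ij}$ depends only on~$(e,f)$ describe a standard-form basis that~$\ScB$ is not; in the actual companion basis the exponents~$c_{ij}$ genuinely involve~$n$.

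The paper supplies exactly the missing step. For each~$0\le i<e$ it solves~$nr_i\equiv -i\pmod e$ with~$0\le r_i<e$, writes~$nr_i=d_ie-i$, and uses minimality of~$\beta$ to show that~$\{\varpi_F^{nk+d_i}\beta^{ek+r_i}:0\le k<f\}$ reduces to a~$k_F$-basis of~$\Fp_E^i/\Fp_E^{i+1}$. Concatenating these gives a genuine standard-form basis~$\ScB'$, and the change of basis from~$\ScB'$ to~$\ScB$ is monomial with entries powers of~$\varpi_F$ whose exponents depend only on~$(n,e)$. This simultaneously proves~$\ScB$ is a splitting basis and that each~$\FA\cap A_{ij}$ depends only on~$n/e$. (A secondary gap: your reduction to the companion basis assumes~$\beta$ is already in companion form in the given~$\ScB$, but the hypothesis only forces~$\beta$ to be upper-Hessenberg with nonzero subdiagonal; the paper's reduction is simply to rescale the~$v_i$ and invoke~\cite{BH98}.)
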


\begin{proof}
Note that the result is unaffected by multiplying the~$v_i$ by scalars so, identifying~$V$ with~$E$ via~$v_1\mapsto 1$, we may assume~$\ScB=(1,\bet,\ldots,\bet^{N-1})$.

We put~$n=-\nu_E(\beta)$,~$e=e(E/F)=e(\FA|\Fo_F)$ and~$f=f(E/F)$. Multiplication by~$n$ induces a bijection~$\BZ/e\BZ\to\BZ/e\BZ$. Thus, for each~$i=0,\ldots,e-1$, there is a unique integer~$r_i$, with~$0\le r_i<e$, such that~$nr_i\equiv -i\pmod{e}$. We write~$nr_i=d_i e - i$; then the fact that~$\bet$ is minimal implies that, for each~$i=0,\ldots,e-1$, the set
\[
\ScB'_{i}:=\{\varpi_F^{nk+d_i}\bet^{ek+r_i}\mid 0\le k\le f-1\}
\]
reduces to a basis for~$\Fp_E^{i}/\Fp_E^{i+1}$. Thus the ordered basis~$\ScB'$, obtained by ordering each~$\ScB'_{i}$ arbitrarily and then concatenating~$\ScB'_0,\ldots,\ScB'_{e-1}$,
is a splitting basis for~$\FA$ with respect to which~$\FA$ is in standard form. The change of basis matrix from~$\ScB'$ to~$\ScB=(1,\ldots,\bet^{ef-1})$ is monomial with entries from~$\varpi_F^{\BZ}$. Moreover, the~$(i,j)$ entry depends only on~$(i,j,n,e)$, and the result follows since~$n,e$ are determined by~$n/e$ as they are coprime.
\end{proof}

\section{Jacquet's conjecture}

Finally, we prove that if~$\pi_1,\pi_2$ are a pair of supercuspidal
representations of~$\RG$ containing minimax strata, then they have a
special pair of Whittaker functions. When~$N$ is prime, any
positive depth supercuspidal of minimal depth in its twist class
contains a minimax stratum, so Jacquet's conjecture will follow from
Propositions~\ref{prop:SPimpliesJ} and~\ref{prop:redmin}, together
with the depth zero case from~\cite[Corollary~1.7]{JNS13}.

At this point, we need to recall a little more on the construction of
the supercuspidal representations of~$\RG$. Since it is all we will need,
we only give definitions for representations which contain a minimax
stratum. Thus, in a slightly different language, we are recounting the
constructions of Carayol~\cite{C84}.

Let~$[\FA,n,n-1,\bet]$ be a minimax stratum,
with~$E=F[\bet]$. Associated to the simple stratum~$[\FA,n,0,\bet]$,
we have the following compact open subgroups of~$\RG$, contained in
the normalizer~$\FK(\FA)$:
\begin{eqnarray*}
&&H^1=H^1(\bet,\FA)=U^1(\Fo_E)U^{\left[\frac n2\right]+1}(\FA),\\
&&J^1=J^1(\bet,\FA)=U^1(\Fo_E)U^{\left[\frac {n+1}2\right]}(\FA),\\
&&\BJ=\BJ(\bet,\FA)=E^\times U^{\left[\frac {n+1}2\right]}(\FA).
\end{eqnarray*}
A \emph{simple character} is then a character of~$H^1$ which extends
the character~$\psi_\bet$ of~$U^{\left[\frac n2\right]+1}(\FA)$. Given
such a simple character~$\theta$, there is a unique irreducible
representation~$\eta$ of~$J^1$ which contains~$\theta$ on restriction
to~$H^1$ (indeed, it is a multiple). An \emph{extended maximal simple
  type} is then an irreducible representation~$\Lambda$ of~$\BJ$ which
extends~$\eta$. Given such a maximal simple type, the representation
\[
\ind_{\BJ}^{\RG}\Lambda
\]
is irreducible and supercuspidal and, moreover, every irreducible
supercuspidal representation containing~$\theta$ arises in this way.

Any irreducible supercuspidal representation~$\pi$ containing a
minimax stratum~$[\FA,n,n-1,\bet]$ contains some simple
character~$\theta$ associated to a simple stratum~$[\FA,n,0,\bet']$,
with~$[\FA,n,n-1,\bet']$ minimax and equivalent
to~$[\FA,n,n-1,\bet]$. Thus~$\pi$ also contains~$[\FA,n,n-1,\bet']$
and we may assume~$\bet'=\bet$.
%

To prove the main result of this paper, recall that we need to exhibit
a special pair of Whittaker functions for two supercuspidal
representations of a specific form.  In \cite{PS08}, Whittaker
functions are constructed which carry the properties that we need.  We
record the result in a form that does not require additional
background.  Recall first (see \S\ref{S:minimax}) that to~$\beta$ we
associate a basis~$\ScB$ and a unipotent subgroup~$U_{\ScB}$ such that
\[
\psi_\beta|{U_\ScB} = \psi_{\ScB}
\]
is the nondegenerate character associated to~$\ScB$.

\begin{prop}[{\cite[Theorem~5.8]{PS08}}]\label{prop:pswhittaker}
There exists a Whittaker function~$W$ for~$\pi$ such that~$\Supp(W)
\subset U_{\ScB} \BJ$ and such that, for~$g \in P_{\ScB}$,
\[
W(g)=\begin{cases}
\psi_{\ScB}(u) \theta(h) &\text{ if }g = uh \in (J^1_i\cap U_{\ScB})H^1_i,\\
0 &\text{ otherwise.}
\end{cases}
\]
\end{prop}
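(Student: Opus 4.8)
This is~\cite[Theorem~5.8]{PS08}; the plan is to reprove it by a Mackey--theoretic analysis of the Whittaker model of~$\pi$, made explicit on~$P_{\ScB}$ using the structural results of~\S\ref{S:minimax}.  I would realize~$\pi=\ind_{\BJ}^{\RG}\Lambda$ and study~$\Hom_{U_{\ScB}}(\pi,\psi_{\ScB})$ (working with the maximal unipotent~$U_{\ScB}$ and character~$\psi_{\ScB}$ attached to~$\ScB$, which is conjugate to the standard datum).  Mackey's formula together with the uniqueness of Whittaker models shows that the Whittaker functional of~$\pi$ is nonzero along exactly one double coset of~$\BJ\backslash\RG/U_{\ScB}$, and the first task is to identify it as the trivial coset.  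This holds because~$\psi_\beta$ is by construction the character~$\psi_{\ScB}$ on~$U_{\ScB}$, the simple character~$\theta$ restricts to~$\psi_\beta$ on~$U^{[\frac n2]+1}(\FA)$, and hence, by~\eqref{eqn:psiBonPcap} together with the Stone--von Neumann description of the Heisenberg representation~$\eta=\Lambda|_{J^1}$, the representation~$\Lambda$ contains~$\psi_{\ScB}$ on~$\BJ\cap U_{\ScB}$.  The upshot is a concrete nonzero~$\lambda\in\Hom_{\BJ\cap U_{\ScB}}(\Lambda,\psi_{\ScB})$ and a~$\theta$-eigenvector~$v_0\in V_\Lambda$ normalized by~$\lambda(v_0)=1$; I would then take~$W$ to be the Whittaker function for~$\pi$ attached to the test vector supported on~$\BJ$ with basepoint value~$v_0$, so that~$\Supp(W)\subseteq U_{\ScB}\BJ$ is immediate from the support of that vector and the description of the functional.

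The structural input used here is exactly~\S\ref{S:minimax}.  By Lemma~\ref{lem:minimaxsplit}, because the stratum is minimax, the companion basis~$\ScB$ is a \emph{splitting basis for~$\FA$}; thus the groups~$U^m(\FA)$ --- and therefore~$H^1$,~$J^1$,~$\BJ$ --- decompose along the block decomposition attached to~$\ScB$, making all of their intersections with~$U_{\ScB}$ and with~$P_{\ScB}$ explicit.  Lemma~\ref{lem:maxintP}, applied with~$\FB=\Fo_E$ to the mirabolic~$P_{\ScB}\supseteq U_{\ScB}$, shows in addition that the~$U^1(\Fo_E)$-part of~$H^1$ and~$J^1$ disappears on intersecting with~$P_{\ScB}$, and a one-line computation with the companion matrix gives~$E^\times\cap P_{\ScB}=\{1\}$, hence~$\BJ\cap P_{\ScB}=J^1\cap P_{\ScB}$.

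With these facts I would evaluate~$W$ on~$P_{\ScB}$.  If~$g\in P_{\ScB}\setminus U_{\ScB}\BJ$ then~$W(g)=0$ by the support condition.  Otherwise~$g=uh$ with~$u\in U_{\ScB}$ and~$h\in\BJ\cap P_{\ScB}=J^1\cap P_{\ScB}$, and unwinding the Whittaker functional gives~$W(g)=\psi_{\ScB}(u)\,\lambda(\Lambda(h)v_0)$ --- the coordinate~$\psi_{\ScB}(u)$ arising precisely because~$\psi_\beta$ agrees with~$\psi_{\ScB}$ as a function (not a character) on~$P_{\ScB}$, which is~\eqref{eqn:psiBonP}.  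When~$g\in(J^1\cap U_{\ScB})H^1$, writing~$h=u_1h_1$ with~$u_1\in J^1\cap U_{\ScB}$ and~$h_1\in H^1$ and using the~$\theta$-eigenvector property of~$v_0$ together with~$\lambda\in\Hom_{\BJ\cap U_{\ScB}}(\Lambda,\psi_{\ScB})$ yields~$\lambda(\Lambda(h)v_0)=\psi_{\ScB}(u_1)\,\theta(h_1)$; and for~$g\in P_{\ScB}\cap U_{\ScB}\BJ$ not of this form, the Stone--von Neumann uniqueness --- the~$\psi_{\ScB}$-eigenspace of~$\eta$ for the maximal isotropic subgroup~$(J^1\cap U_{\ScB})H^1/H^1$ of the symplectic space~$J^1/H^1$ is one-dimensional --- forces~$\lambda(\Lambda(h)v_0)=0$.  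That the resulting value is independent of the choices of decomposition is once more exactly~\eqref{eqn:psiBonPcap}, and assembling the pieces gives the stated formula.

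The step I expect to be the main obstacle is this last Heisenberg input: that~$(J^1\cap U_{\ScB})H^1/H^1$ is a \emph{maximal} isotropic subspace of~$J^1/H^1$, since this is what makes the relevant eigenspaces one-dimensional and the ``$0$ otherwise'' clause exact, and it is also precisely where minimaxness is genuinely used --- Lemma~\ref{lem:minimaxsplit} is what lets one locate~$J^1\cap U_{\ScB}$ and~$H^1\cap U_{\ScB}$ explicitly enough inside~$J^1/H^1$ to see that the isotropy is maximal.  In substance this is the structure theory of minimal strata of Carayol~\cite{C84} and Bushnell--Henniart~\cite[\S2]{BH98}; the remaining steps are bookkeeping with the filtrations afforded by the splitting basis and with~\eqref{eqn:psiBonP}--\eqref{eqn:psiBonPcap}.
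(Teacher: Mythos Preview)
The paper does not prove Proposition~\ref{prop:pswhittaker}: it is recorded as a black-box citation of~\cite[Theorem~5.8]{PS08}, with no argument given. So there is no in-paper proof to compare against; your sketch is in effect a proposed reproof of the cited result, specialised to the minimax case so that Lemmas~\ref{lem:maxintP} and~\ref{lem:minimaxsplit} can supply the structural input. That specialisation is exactly the strategy of~\cite{PS08} itself (Mackey analysis of~$\Hom_{U_\ScB}(\ind_{\BJ}^{\RG}\Lambda,\psi_\ScB)$, identification of the supporting double coset, Heisenberg description of~$\eta$), so in outline your route is the same as the source you are reproving.

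Two comments on the details. First, a small slip: from~$E^\times\cap P_{\ScB}=\{1\}$ you cannot conclude~$\BJ\cap P_{\ScB}=J^1\cap P_{\ScB}$ in one line, because~$\BJ=E^\times J^1$ is not a direct product ($E^\times\cap J^1=U^1(\Fo_E)$ is nontrivial). What is needed is a level-zero analogue of Lemma~\ref{lem:maxintP}: if~$g=e u\in P_{\ScB}$ with~$e\in E^\times$,~$u\in U^{[(n+1)/2]}(\FA)$, one first shows~$e\in\Fo_E^\times$ (any nonzero power of~$\varpi_E$ would force all eigenvalues of~$g$ into~$\Fp_F$ or~$\Fp_F^{-1}$), and then that the image of~$e$ in~$k_E^\times\hookrightarrow\FA/\FP$ has eigenvalue~$1$, hence~$e\in U^1(\Fo_E)$. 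This is the same reduction-mod-$\FP$ trick as in the proof of Lemma~\ref{lem:maxintP}, but it is not literally that lemma.

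Second, you are right that the crux is the maximal total isotropy of~$(J^1\cap U_{\ScB})H^1/H^1$ inside~$J^1/H^1$, and honest that your sketch does not close it. But the gesture towards Lemma~\ref{lem:minimaxsplit} is not quite enough: knowing that~$\ScB$ is a splitting basis for~$\FA$ lets you compute~$U^m(\FA)\cap U_{\ScB}$ block by block, but it does not by itself identify~$J^1/H^1$ or show that the image of~$J^1\cap U_{\ScB}$ has half the dimension. In~\cite{PS08} this step is handled by an Iwahori-type decomposition of~$J^1$ and~$H^1$ relative to a suitable flag and a dimension count (their~\S4); in the minimax case one can instead compute~$J^1/H^1\cong\FP^{[(n+1)/2]}/(\Fp_E^{[(n+1)/2]}+\FP^{[n/2]+1})$ explicitly in the companion basis and read off the Lagrangian, but that computation is still the substantive content and is not in the present paper.
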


Our main result is:

\begin{prop}\label{prop:minimax}
For~$i=1,2$, let~$\pi_i$ be a (positive depth) unitarizable
supercuspidal representation of~$\RG$ containing a minimax
stratum. Suppose that~$\pi_1,\pi_2$ have the same depth and the same
central character. Then~$\pi_1,\pi_2$ have a special pair of
Whittaker functions.
\end{prop}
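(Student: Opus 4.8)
The strategy is to take, for each $\pi_i$, the explicit Whittaker function $W_i$ supplied by Proposition~\ref{prop:pswhittaker}, built from a minimax stratum $[\FA_i,n,n-1,\bet_i]$ contained in $\pi_i$, a basis $\ScB_i$ adapted to $\bet_i$, and a simple character $\theta_i$ of $H^1_i$. We must then (a) arrange that these two functions are ``compatible'' enough to be compared, and (b) verify that the resulting pair is special in the sense of the definition in Section~\ref{S:special}. Since $\pi_1,\pi_2$ have the same depth $\ell=n/e$ and $E_i/F$ is maximal (so $f=f(E_i/F)=N/e$ is determined, and $n,e$ are coprime and determined by $\ell$), the two orders and the two lattice decompositions can be made to coincide: by Lemma~\ref{lem:minimaxsplit} each $\ScB_i$ is a splitting basis for $\FA_i$, and the lattices $\FA_i\cap A_{jk}$ depend only on $\ell$. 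Hence, after conjugating $\pi_2$ by an element of $\RG$ (which changes nothing up to isomorphism) we may assume $\FA_1=\FA_2=:\FA$, $\ScB_1=\ScB_2=:\ScB$, so that $U_{\ScB}$, $P_{\ScB}$, $\psi_{\ScB}$, and the groups $H^1=H^1_i$, $J^1=J^1_i$, $\BJ=\BJ_i$ are all the same; here we use that $H^1,J^1,\BJ$ depend only on $\bet$ through the order $\FA$ and the field $E=F[\bet]$, which sit inside $A$ in the standard companion-form way.

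\textbf{Reducing to $\theta_1=\theta_2$ on $H^1\cap P_{\ScB}$.} With the common setup, Proposition~\ref{prop:pswhittaker} gives $W_i$ supported on $U_{\ScB}\BJ$ with $\Supp(W_i)\cap P_{\ScB}\subseteq (J^1\cap U_{\ScB})H^1$ and $W_i(uh)=\psi_{\ScB}(u)\theta_i(h)$ there. So $W_1(p)=W_2(p)$ for all $p\in P_{\ScB}$ will follow once we know $\theta_1|_{H^1\cap P_{\ScB}}=\theta_2|_{H^1\cap P_{\ScB}}$. Now $H^1=U^1(\Fo_E)U^{[n/2]+1}(\FA)$, and the two simple characters agree on $U^{[n/2]+1}(\FA)$ because both restrict there to $\psi_{\bet_i}$ and, by equivalence of the strata (we may take $\bet_1\equiv\bet_2$ modulo $\FP^{-(n-1)}$ after adjusting, as in the paragraph recalling Carayol's construction, and in any case $\psi_{\bet_i}$ on $U^{[n/2]+1}(\FA)$ only sees $\bet_i$ mod $\FP^{-[n/2]}$), these characters of $U^{[n/2]+1}(\FA)$ coincide — and crucially we only need agreement \emph{on the intersection with $P_{\ScB}$}. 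The key point is that $H^1\cap P_{\ScB}$ lies inside $U^{[n/2]+1}(\FA)$: this is exactly Lemma~\ref{lem:maxintP} applied with $\FB=\Fo_E$ and $m=1$, which gives $(U^1(\Fo_E)U^{[n/2]+1}(\FA))\cap P_{\ScB}=U^{[n/2]+1}(\FA)\cap P_{\ScB}$, so the ``$U^1(\Fo_E)$ part'' of $H^1$ contributes nothing on $P_{\ScB}$. Hence on $H^1\cap P_{\ScB}$ both $\theta_i$ equal $\psi_{\bet_i}|_{U^{[n/2]+1}(\FA)\cap P_{\ScB}}=\psi_{\ScB}$ by~\eqref{eqn:psiBonPcap}; in particular $\theta_1=\theta_2$ there, giving $W_1(p)=W_2(p)$ for $p\in P_{\ScB}$.

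\textbf{Specialness.} It remains to check that each $W_i$ is $\BK$-special for a common $\BK$, namely $\BK=\BJ$. The support condition $\Supp(W_i)\subset U_{\ScB}\BJ=U_N\BK$ (after the conjugation that put things in standard position, $U_{\ScB}$ becomes the standard $\RU_N$, or one works throughout with $U_{\ScB}$, which is the honest basis-free formulation of the definition) is given by Proposition~\ref{prop:pswhittaker}. For the symmetry $W_i(k^{-1})=\overline{W_i(k)}$ on $\BK=\BJ$: one extends the explicit formula for $W_i$ from $P_{\ScB}$ to all of $\BJ$ using the Whittaker/quasi-invariance relation $W_i(gz)=\omega_{\pi_i}(z)W_i(g)$ for $z\in Z_N$ and $W_i(ug)=\psi_{\ScB}(u)W_i(g)$ for $u\in U_{\ScB}$, together with the structure $\BJ=E^\times U^{[(n+1)/2]}(\FA)$ and $J^1=U^1(\Fo_E)U^{[(n+1)/2]}(\FA)$; on $J^1$ the value is controlled by $\theta_i$ via the unique $\eta_i\supset\theta_i$, and on $E^\times$ by the central character. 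Since $\theta_i$ is a character and the central character $\omega_{\pi_1}=\omega_{\pi_2}$ is unitary (the $\pi_i$ are unitarizable), the relation $W_i(k^{-1})=\overline{W_i(k)}$ reduces to the corresponding statement for $\theta_i$, i.e.\ $\theta_i(h^{-1})=\overline{\theta_i(h)}$, which holds because a simple character attached to a skew element of a unitarizable type is unitary (this is part of the $\BK$-special construction already carried out in \cite{JNS13} for the depth-zero and other cases, and the argument is identical here). Assembling: $W_1,W_2$ are both $\BJ$-special and agree on $P_{\ScB}$, so $(W_1,W_2)$ is a special pair for $(\pi_1,\pi_2)$.

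\textbf{Main obstacle.} The one genuinely delicate point is the normalization/matching step: making precise that, after an inner twist, the two representations can be assumed to share \emph{the same} $\FA$, $\ScB$, $H^1$, $J^1$, $\BJ$, and that one may take $\theta_1=\theta_2$ on $H^1\cap P_{\ScB}$ (not on all of $H^1$ — that would be too much). This is where Lemmas~\ref{lem:maxintP} and~\ref{lem:minimaxsplit} do the real work: the former kills the $U^1(\Fo_E)$ contribution on the mirabolic, reducing the simple-character comparison to the level-$[n/2]$ stratum datum, where equality is automatic from equivalence of strata; the latter guarantees the lattice/basis data depend only on the depth. Everything else — transporting the Paskunas–Stevens formula, extending from $P_{\ScB}$ to $\BJ$, and the unitarity bookkeeping — is routine given the cited results.
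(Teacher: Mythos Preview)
Your overall approach is the same as the paper's --- build the two Whittaker functions via Proposition~\ref{prop:pswhittaker}, use Lemma~\ref{lem:minimaxsplit} to make the orders coincide, and use Lemma~\ref{lem:maxintP} to kill the $U^1(\Fo_E)$-contribution on the mirabolic so that the simple characters agree there --- but there is a genuine gap in your normalization step.

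You assert that, after conjugating, ``the groups $H^1=H^1_i$, $J^1=J^1_i$, $\BJ=\BJ_i$ are all the same'', justifying this by saying they ``depend only on $\bet$ through the order $\FA$ and the field $E=F[\bet]$, which sit inside $A$ in the standard companion-form way''. This is false. Arranging both $\bet_1$ and $\bet_2$ to be in companion form with respect to the same basis $\ScB=(v,\bet_1 v,\ldots,\bet_1^{N-1}v)=(v,\bet_2 v,\ldots,\bet_2^{N-1}v)$ forces $\bet_1^{j}v=\bet_2^{j}v$ only for $0\le j\le N-1$; the minimal polynomials of $\bet_1,\bet_2$ are unrelated, so $\bet_1\ne\bet_2$ in general and the subfields $E_1=F[\bet_1]$, $E_2=F[\bet_2]$ of $A$ are distinct. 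Consequently $U^1(\Fo_{E_1})\ne U^1(\Fo_{E_2})$, and $H^1_1\ne H^1_2$, $J^1_1\ne J^1_2$, $\BJ_1\ne\BJ_2$ in general. (Relatedly, your parenthetical that one ``may take $\bet_1\equiv\bet_2$ modulo $\FP^{-(n-1)}$'' is also unjustified: nothing in the hypotheses forces the two minimax strata to be equivalent.)

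This does not damage the comparison on $P_\ScB$: Lemma~\ref{lem:maxintP} gives $H^1_i\cap P_\ScB=U^{[n/2]+1}(\FA)\cap P_\ScB$ and $J^1_i\cap U_\ScB=U^{[(n+1)/2]}(\FA)\cap U_\ScB$, both independent of $i$, and then~\eqref{eqn:psiBonPcap} gives the common value $\psi_\ScB$, exactly as in the paper. Where the gap bites is your choice $\BK=\BJ$: there is no single such group, so you have no common $\BK$ for which both $W_i$ are $\BK$-special. The paper's fix is to take $\BK=\FK(\FA)$, which contains both $\BJ_i$ since $E_i^\times\subset\FK(\FA)$; the required symmetry $W_i(k^{-1})=\overline{W_i(k)}$ on $\FK(\FA)$ is then supplied by \cite[Lemma~4.2]{JNS13} (via unitarity of $\Lambda_i$), rather than by the reduction to $\theta_i(h^{-1})=\overline{\theta_i(h)}$ you sketch, which is too coarse since on $\BJ_i$ the function $W_i$ is governed by the representation $\Lambda_i$, not merely by the character $\theta_i$.
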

\begin{proof}
For~$i=1,2$, let~$[\FA_i,n_i,n_i-1,\bet_i]$ be a minimax stratum of
period~$e_i=e(\FA_i|\Fo_F)$ contained in~$\pi_i$, with the property
that~$\pi_i$ also contains a simple character~$\theta_i$
of~$H^1_i=H^1(\bet_i,\FA_i)$. Since the representations have the same
depth we have~$n_1/e_1=n_2/e_2$; since they are minimax, we
have~$\gcd(n_i,e_i)=1$ and we may write~$n=n_1=n_2$ and~$e=e_1=e_2$.

Fix~$v\in V$ and let~$g\in \RG$ be the change of basis matrix from the
basis~$\ScB=(v,\bet_1 v,\ldots,\bet_1^{N-1} v)$ to~$(v,\bet_2
v,\ldots,\bet_2^{N-1} v)$. Then, replacing the
stratum~$[\FA_2,n,n-1,\bet_2]$ and the simple character~$\theta_2$ by
their conjugates by~$g$, we can assume that \emph{both}~$\bet_i$ are
in companion matrix form with respect to~$\ScB$,
i.e.~$\bet_1^{j-1}v=\bet_2^{j-1}v$, for~$1\le j\le N$. By
Lemma~\ref{lem:minimaxsplit}, the hereditary orders
coincide:~$\FA_1=\FA_2=\FA$. By Lemma~\ref{lem:maxintP}, we
have~$H^1_i\cap P_{\ScB}=U^{\left[\frac n2\right]+1}(\FA)\cap
P_{\ScB}$ so, by~\eqref{eqn:psiBonPcap}, we have
\[
\theta_i|{H^1_i\cap P_{\ScB}}=\psi_{\ScB},
\]
independent of~$i$. Moreover, we then have~$\Hom_{H^1_i\cap
  U_{\ScB}}(\theta_i,\psi_{\ScB})\ne 0$.

We abbreviate~$J^1_i=J^1(\bet_i,\FA)$
and~$\BJ_i=\BJ(\beta_i,\FA)$, and denote by~$\eta_i$ the unique
irreducible representation of~$J^1_i$ containing~$\theta_i$. Then
we also have~$\Hom_{J^1_i\cap  U_{\ScB}}(\eta_i,\psi_{\ScB})\ne 0$,
by~\cite[Theorem~2.6]{PS08}.
Writing~$\pi_i=\ind_{\BJ_i}^\RG\Lambda_i$, with~$\Lambda_i$
extending~$\eta_i$, we have~$\BJ_i\cap U_{\ScB}=J^1_i\cap U_{\ScB}$
and we see that~$\Hom_{\BJ_i\cap U_{\ScB}}(\Lambda_i,\psi_{\ScB})\ne
0$. Thus we have a Whittaker function~$W_i$ as constructed
in Proposition \ref{prop:pswhittaker} relative to the pair~$(U_\ScB,\psi_{\ScB})$ and these
coincide on~$P_{\ScB}$ since, for~$g\in P_{\ScB}$,
\[
W_i(g)=\begin{cases}
\psi_{\ScB}(g) &\text{ if }g\in (J^1_i\cap U_{\ScB})(H^1_i\cap P_{\ScB}),\\
0 &\text{ otherwise.}
\end{cases}
\]
Putting~$\BK=\FK(\FA)$, both~$W_i$ are~$\BK$-special
(see~\cite[Lemma~4.2]{JNS13}) so we have found a special pair of
Whittaker functions.
\end{proof}

\begin{cor}\label{cor:Jprime}
For~$N$ prime, Conjecture~$\CJ(N,[\frac N2])$ is true.
\end{cor}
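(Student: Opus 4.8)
The plan is to assemble Corollary~\ref{cor:Jprime} from the pieces already in hand, the only new ingredient being a structural observation about supercuspidals of $\RG_N$ when $N$ is prime. By Proposition~\ref{prop:redmin}, it suffices to prove Conjecture~$\CJ_0(N,[\frac N2])$, i.e.\ to treat only $\pi_1,\pi_2$ of minimal depth in their twist class. So let $\pi_1,\pi_2$ be such representations satisfying hypothesis~$\CH_{\le[N/2]}$. By~\cite[Corollary~2.7]{JNS13} (quoted after the definition of special pair), hypothesis~$\CH_1$ forces $\pi_1,\pi_2$ to have the same central character; and, as recalled in Section~\ref{S:twist}, hypothesis~$\CH_1$ applied with $\tau=\boI$ forces $\ell(\pi_1)=\ell(\pi_2)$ via the conductor of the standard epsilon factor. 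So the two representations automatically have the same depth and the same central character.

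Next I would dichotomize on the depth. If $\ell(\pi_1)=\ell(\pi_2)=0$, then $\pi_1,\pi_2$ are depth zero supercuspidals and Conjecture~$\CJ(N,[\frac N2])$ already holds for them by~\cite[Corollary~1.7]{JNS13}, so $\pi_1\simeq\pi_2$. If the common depth is positive, then each $\pi_i$ contains some minimal stratum $[\FA_i,n_i,n_i-1,\bet_i]$. Here is where primality of $N$ enters: since $\pi_i$ is of minimal depth in its twist class, the stratum is non-scalar, so $F[\bet_i]/F$ is a proper extension; but its degree divides $N$, which is prime, hence the degree is exactly $N$ and $[\FA_i,n_i,n_i-1,\bet_i]$ is a max --- therefore minimax --- stratum (this is exactly the remark made in Section~\ref{S:twist}, attributed also to Carayol~\cite{C84}). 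Thus both $\pi_1,\pi_2$ are positive depth unitarizable supercuspidals containing minimax strata with the same depth and central character, so Proposition~\ref{prop:minimax} gives them a special pair of Whittaker functions. Finally, Proposition~\ref{prop:SPimpliesJ} (with $r=[N/2]$) applies --- since $\pi_1,\pi_2$ have a special pair of Whittaker functions and satisfy hypothesis~$\CH_{\le[N/2]}$ --- and yields $\pi_1\simeq\pi_2$.

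There is essentially no obstacle here: the corollary is a bookkeeping synthesis of Propositions~\ref{prop:SPimpliesJ}, \ref{prop:redmin}, \ref{prop:minimax} and the depth zero result of~\cite{JNS13}, with the sole genuine input being the elementary fact that over a prime-degree situation ``non-scalar'' upgrades to ``maximal.'' The one point that needs a word of care is the reduction to minimal depth in the twist class: one must make sure that twisting by a character does not destroy the supercuspidality or change the relevant invariants, but this is precisely the content of Lemma~\ref{lem:transfertwist} and Proposition~\ref{prop:redmin}, so nothing further is required. I would therefore write the proof of Corollary~\ref{cor:Jprime} as a short paragraph: invoke Proposition~\ref{prop:redmin} to reduce to $\CJ_0$, split on depth zero versus positive depth, use primality to get a minimax stratum in the positive depth case, apply Proposition~\ref{prop:minimax} and then Proposition~\ref{prop:SPimpliesJ}.
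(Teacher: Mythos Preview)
Your proposal is correct and follows essentially the same route as the paper: reduce to~$\CJ_0$ via Proposition~\ref{prop:redmin}, use~$\CH_1$ to match depth and central character, handle depth zero by~\cite[Corollary~1.7]{JNS13}, and in positive depth use primality of~$N$ to force any non-scalar minimal stratum to be minimax, then apply Propositions~\ref{prop:minimax} and~\ref{prop:SPimpliesJ}. Your explanation of why primality upgrades ``non-scalar'' to ``minimax'' is in fact slightly more explicit than the paper's, which simply asserts that~$\pi_i$ contains a minimax stratum.
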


\begin{proof}
Let~$\pi_1,\pi_2$ be unitarizable supercuspidal representations of~$\RG$
satisfying hypothesis~$\CH_{\le [N/2]}$, and of minimal depth in their
twist classes. In particular, from hypothesis~$\CH_1$, the
representations~$\pi_1,\pi_2$ have the same central character and
depth. If both have depth zero then they are equivalent
by~\cite[Corollary~1.7]{JNS13}, so we assume they are of positive depth.
Since~$N$ is prime,~$\pi_i$ contains a minimax
stratum~$[\FA_i,n_,n_i-1,\bet_i]$, for~$i=1,2$. Since the
representations~$\pi_1,\pi_2$ have the same depth, we
have~$n_1/e_1=n_2/e_2$ and, since~$n_i,e_i$ are coprime, in
fact~$n_1=n_2$. But now Proposition~\ref{prop:minimax} implies
that~$\pi_1,\pi_2$ have a special pair of Whittaker functions, and
Proposition~\ref{prop:SPimpliesJ} implies
that~$\pi_1\simeq\pi_2$. Thus Conjecture~$\CJ_0(N,[\frac N2])$ is
true, and the result now follows from Proposition~\ref{prop:redmin}.
\end{proof}

\end{document}